\newtheorem{theorem}{Theorem}
\newtheorem{corollary}[theorem]{Corollary}
\newtheorem{lemma}[theorem]{Lemma}
\newtheorem{proposition}[theorem]{Proposition}
\theoremstyle{definition}
\newtheorem{definition}{Definition}
\newtheorem*{claim}{Claim}
\title{Juggling card sequences}
\author{Steve Butler\thanks{Dept.\ of Mathematics, Iowa State University, Ames, IA 50011, USA {\tt butler@iastate.edu}.  Partially supported by an NSA Young Investigator grant.} \and 
Fan Chung\thanks{Dept.\ of Mathematics and Dept.\ of Computer Science and Engineering, UC San Diego, La Jolla, CA 92093, USA {\tt fan@ucsd.edu}.} \and 
Jay Cummings\thanks{Dept.\ of Mathematics, UC San Diego, La Jolla, CA 92093, USA {\tt jjcummings@ucsd.edu}.} \and 
Ron Graham\thanks{Dept.\ of Computer Science and Engineering, UC San Diego, La Jolla, CA 92093, USA {\tt graham@ucsd.edu}.}}
\date{April 6, 2015}
\begin{document}
\maketitle

\begin{abstract}
Juggling patterns can be described by a sequence of cards which keep track of the relative order of the balls at each step.  This interpretation has many algebraic and combinatorial properties, with connections to Stirling numbers, Dyck paths, Narayana numbers, boson normal ordering, arc-labeled digraphs, and more.  Some of these connections are investigated with a particular focus on enumerating juggling patterns satisfying certain ordering constraints, including where the number of crossings is fixed.
\end{abstract}

\section{Introduction}
It is traditional for mathematically-inclined jugglers to represent various juggling patterns by sequences $T=(t_1, t_2, \ldots, t_n)$ where the $t_i$ are natural numbers. The connection to juggling being that at time $i$, the object (which we will assume is a \emph{ball}) is thrown so that it comes down $t_i$ time units later at time $i+t_i$.  The usual convention is that the sequence $T$ is repeated indefinitely, i.e., it is periodic, so that the expanded pattern is actually $(\ldots,t_1, t_2, \ldots, t_n, t_1, t_2, \ldots, t_n, \ldots)$.

A sequence $T$ is said to be a \emph{juggling sequence}, or \emph{siteswap sequence}, provided that it never happens that two balls come down at the same time. For example, $(3,4,5)$ is a juggling sequence while $(3,5,4)$ is not. It is known \cite{BEGW} that a necessary and sufficient condition for $T$ to be a juggling sequence is that all the quantities $i + t_i \pmod n$ are distinct. For a juggling sequence $T=(t_1, t_2, \ldots, t_n)$, its \emph{period} is defined to be $n$. A well known property is that the number of balls $b$ needed to perform $T$ is the average $b = \frac1n\sum_{i=1}^n t_i$. It is also known that the number of juggling sequences with period $n$ and at most $b$ balls is $b^n$ (cf.\ \cite{BG,BEGW}; our convention assumes that we will always catch and then immediatly throw something at every step, or in other words there are no $0$ throws).

There is an alternative way to represent periodic juggling patterns, a variation of which was first introduced by Ehrenborg and Readdy \cite{ER}. For this method, certain \emph{cards} are used to indicate the relative ordering of the balls (with respect to when they will land) as the juggling pattern is executed. One might call the first representation ``time'' sequences for representing juggling patterns while the second representation might be called ``order'' sequences for representing these same patterns. 

In this paper, we will explore various algebraic and combinatorial properties associated with these order  sequences. It will turn out that there are a number of unexpected connections with a wide variety of combinatorial structures.  In the remainder of this section we will introduce these juggling card sequences, and then in the ensuing sections will count the number of juggling card sequences that induce a given ordering, count the number of juggling card sequences that do not change the ordering and have a fixed number of crossings, and look at the probability that the induced ordering consists of a single cycle.

\subsection{Juggling card sequences}
We will represent juggling patterns by the use of \emph{juggling cards}. Sequences of these juggling cards will describe the behaviors of the balls being juggled.  In particular, the set of juggling cards produce the juggling diagram of the pattern.

Throughout the paper, we will let $b$ denote the number of balls that are available to be juggled. We will also have available to us a collection of cards $\mathcal{C}$ that can be used.  In the setting when at each time step one ball is caught and then immediately thrown, we can represent these by $C_1,C_2,\ldots,C_b$ where $C_i$ indicates that the bottom ball in the ordering has now dropped into our hand and we now throw it so that relative to the other balls it will now be the $i$-th ball to land.  Visually we draw the cards so that there are $b$ levels on each side of the card (numbered \emph1,\emph2,\ldots,\emph{b} from bottom to top) and $b$ tracks connecting the levels on the left to the levels on the right by the following:  level \emph1 connects to level $i$; level $j$ connects to level $j-\emph1$ for $2\le j\le i$; level $j$ connects to level $j$ for $i+1\le j\le b$.  An example of the cards when $b=4$ is shown in Figure~\ref{4-cards}.

\begin{figure}[hftb]
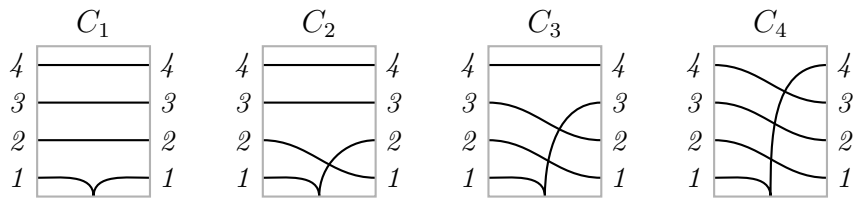

    \centering
    \picAA
    \caption{Cards for $b=4$}
    \label{4-cards}
\end{figure}

As we juggle the $b$ balls, $1,2,\ldots,b$, move along the track on the cards.  For each card $C_i$ the relative ordering of the balls changes and corresponds to a permutation $\pi_{C_i}$.  Written in cycle form this permutation is $\pi_{C_i}=(i~i{-}1~\ldots~2~1)$.  In particular, a ball starting on level $j$ on the left of card $C_i$ will be on level $\pi_{C_i}(j)$ on the right of card $C_i$.

A sequence of cards, $A$, written by concatenation, i.e., $C_{i_1}C_{i_2}\ldots C_{i_n}$, is a \emph{juggling card sequence} of length $n$.  The $n$ cards of $A$ are laid out in order so that the levels match up.  The balls now move from the left of the sequence of cards to the right of the sequence of cards with their relative ordering changing as they move.  The resulting final change in the ordering of the balls is a permutation denoted  $\pi_A$, i.e., a ball starting on level $i$ will end on $\pi_A(i)$.  We note that $\pi_A=\pi_{C_{i_1}} \pi_{C_{i_2}} \cdots \pi_{C_{i_n}}$.  We will also associate with juggling card sequence $A$ the arrangement $[\pi_A^{-1}(1), \pi_A^{-1}(2), \ldots, \pi_A^{-1}(b)]$, which corresponds to the resulting ordering of the balls on the right of the diagram when read from bottom to top.

As an example, in Figure \ref{4-card_sequence} we look at $A=C_3C_3C_2C_4C_3C_4C_3C_2C_2$ (note we allow ourselves the ability to repeat cards as often as desired).  For this juggling card sequence we have $\pi_{A} = (1~2~4~3)$ and corresponding arrangement $[3,1,4,2]$.  We have also marked the ball being thrown at each stage under the card for reference.

\begin{figure}[hftb]
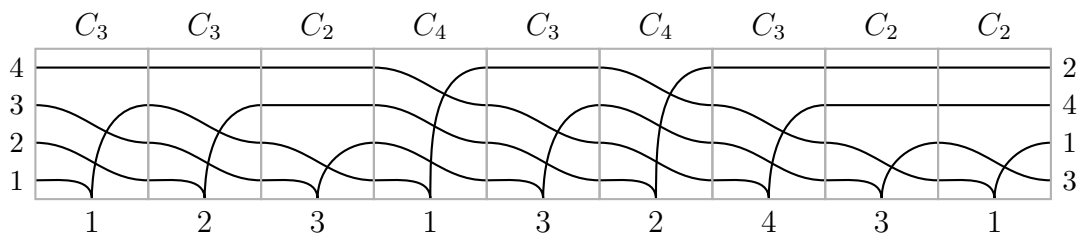

    \centering
    \picAC
    \caption{A juggling card sequence $A$; below each card we mark the ball thrown}
    \label{4-card_sequence}
\end{figure}

From the juggling card sequence we can recover the siteswap sequence by letting $t_i$ be the number of cards traversed starting at the bottom of the $i$th card until we return to the bottom of some other card.  For example, the siteswap pattern in Figure~\ref{4-card_sequence} is $(3,4,2,5,3,10,5,2,2)$.

We can also increase the number of balls caught and then thrown at one time, which is known as \emph{multiplex} juggling.  In the more general setting we will denote the cards $C_S$ where $S=(s_1,s_2,\ldots,s_k)$ is an ordered subset of $[b]$.  Each card still has levels $\emph1,\emph2,\ldots,b$ and now for $1\le j \le k$ the ball at level $i$ goes to level $s_i$ and the remaining balls then fill the available levels in a way that preserves their order.  As an example, the cards $C_{2,5}$ and $C_{5,2}$ are shown in Figure~\ref{25_cards} for $b=5$.

\begin{figure}[hftb]
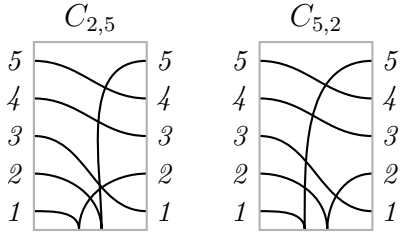

    \centering
    \picAD
    \caption{The cards $C_{2,5}$ and $C_{5,2}$ for $b=5$}
    \label{25_cards}
\end{figure}

As before we can combine these together to form juggling card sequences $A$ which induce permutations $\pi_A$ and corresponding arrangements.  An example of a juggling card sequence composed of cards $C_S$ with $|S|=2$ is shown in Figure~\ref{fig:juggexample} which has corresponding arrangement $[3,4,2,5,1]$.  We note that it is also possible to form juggling card sequences which have differing sizes of $|S|$.

\begin{figure}[hftb]
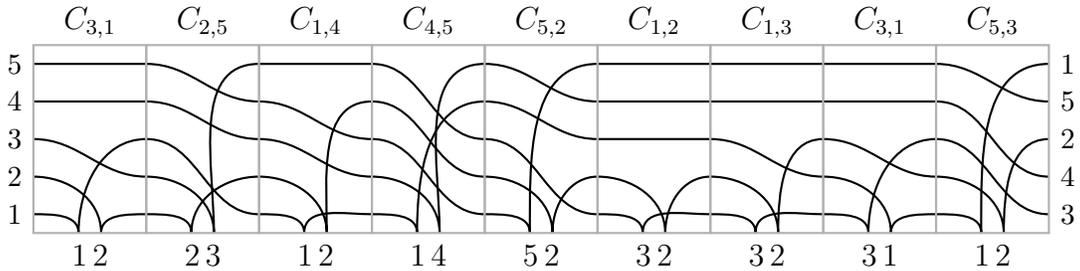

    \centering
    \picZZ
    \caption{A juggling card sequence $A$; below each card we mark the balls thrown}
    \label{fig:juggexample}
\end{figure}

\section{Juggling card sequences with given arrangement}\label{sec3}
In this section we will consider the problem of enumerating juggling card sequences of length $n$ using cards drawn from a collection of cards $\mathcal{C}$ with the final arrangement corresponding to the permutation $\sigma$.  We will denote the number of such sequences by $JS(\sigma, n, \mathcal{C})$.  This will be dependent on the following parameter.

\begin{definition}
Let $\sigma$ be a permutation of $1,2,\ldots,b$.  Then $L(\sigma)$ is the largest $\ell$ such that $\sigma(b-\ell+1)<\cdots<\sigma(b-1)<\sigma(b)$.  Alternatively, $L(\sigma)$ is the largest $\ell$ so that $b-\ell+1,\ldots,b-1,b$ appear in increasing order in the arrangement for $\sigma$.
\end{definition}

As an example, the final arrangement in Figure~\ref{4-card_sequence} has $L(\sigma)=2$ and the final arrangement in Figure~\ref{fig:juggexample} has $L(\sigma)=3$.

The key idea for our approach will be that with information about what balls are thrown we can ``work backwards''.  In particular, we have the following.

\begin{proposition}\label{prop:backone}
Given a single card, if we know the ordering of balls on the right hand side of the card \emph{and} we know which balls are thrown, then we can determine the card $C_S$ and the ordering of the balls on the left hand side of the card.
\end{proposition}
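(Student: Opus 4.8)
The plan is to reconstruct everything by a short forced-choice argument, using nothing beyond the combinatorial description of the card $C_S$. First I would set up notation: suppose the data tells us that $k$ balls are thrown. By the definition of the multiplex card $C_S$ with $S=(s_1,\ldots,s_k)$, the thrown balls are exactly those sitting on the bottom $k$ levels $1,\ldots,k$ on the left, the ball on left-level $i$ is sent to right-level $s_i$, and the remaining $b-k$ balls keep their relative order. Since ``which balls are thrown'' records these balls \emph{in order} (this is precisely the row of labels written beneath each card in Figures~\ref{4-card_sequence} and~\ref{fig:juggexample}), we already know the bottom $k$ entries of the left-hand arrangement: left-level $i$ carries the $i$-th ball on that list.

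Next I recover the card. The ball on left-level $i$ has a known identity, and it occupies a unique level in the given right-hand arrangement; call that level $s_i$. This yields the ordered tuple $S=(s_1,\ldots,s_k)$ and hence the card $C_S$, and no other choice is possible, since the card must send left-level $i$ to right-level $s_i$.

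Finally I recover the top part of the left-hand arrangement. The $b-k$ untouched balls occupy, on the right, exactly the levels in $[b]\setminus\{s_1,\ldots,s_k\}$, and because $C_S$ preserves their relative order, reading them from bottom to top on the right gives the order in which they sit on left-levels $k+1,\ldots,b$; placing them accordingly completes the left-hand arrangement. Every step along the way was forced, so the card and the left-hand ordering are uniquely determined by the data.

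I expect no deep obstacle here; the only point that genuinely needs care is reading the hypothesis ``we know which balls are thrown'' as an \emph{ordered} list rather than a set. This matters: the set alone does not determine $S$, since for $b=3$ the right-hand arrangement $[2,1,3]$ together with thrown set $\{1,2\}$ arises both from $C_{1,2}$ (with left-hand arrangement $[2,1,3]$) and from $C_{2,1}$ (with left-hand arrangement $[1,2,3]$). With the order included, as it is everywhere in the paper, each of the three reconstruction steps above is unambiguous and the proposition follows.
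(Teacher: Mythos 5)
Your proof is correct and follows essentially the same route as the paper's: read off each $s_i$ as the right-hand position of the $i$-th thrown ball, put the thrown balls in order on the bottom $k$ left-hand levels, and fill in the rest by order preservation. Your closing remark that ``which balls are thrown'' must be an \emph{ordered} list (with the $C_{1,2}$ versus $C_{2,1}$ example) is a worthwhile clarification the paper leaves implicit.
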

\begin{proof}
Suppose that $i_1,i_2,\ldots,i_\ell$ are the balls, in that order, which are thrown.  Then the card is $C_S$ where $S=(s_1,s_2,\ldots,s_\ell)$ and $s_j$ is the location of ball $i_j$ in the ordering of the balls (i.e., where the ball $i_j$ moved).  The ordering of the left hand side starts $i_1,i_2,\ldots,i_\ell$ and the remaining balls are then determined by noting that their ordering must be preserved.
\end{proof}

\subsection{Throwing one ball at a time}
We now work through the case when one ball at a time is caught and then immediately thrown.

\begin{theorem}\label{fact1a}
Let $b$ be the number of balls and $\mathcal{C}=\{C_1,\ldots,C_b\}$.  Then 
\[
    JS(\sigma, n, \mathcal{C}) = \sum_{k=b-L(\sigma)}^b \bigg\{\!{n\atop k}\!\bigg\}, 
\]
where $\big\{\!{n\atop k}\!\big\}$ denotes the Stirling numbers of the second kind.
\end{theorem}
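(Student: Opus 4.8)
The plan is to set up a bijection between juggling card sequences of length $n$ ending in the arrangement for $\sigma$ and set partitions of $[n]$ into at least $b-L(\sigma)$ blocks, since $\sum_{k=b-L(\sigma)}^b \big\{\!{n\atop k}\!\big\}$ counts exactly the latter. The engine is Proposition~\ref{prop:backone}: reading the sequence of cards from right to left, if at each step we know the current ordering of the balls and which single ball is thrown, we recover the card and the ordering one step to the left. Since we are throwing one ball at a time, ``which ball is thrown'' at step $i$ is a single element of $[b]$, so a juggling card sequence (with the final arrangement fixed to be that of $\sigma$) is completely determined by the sequence $(x_1,x_2,\ldots,x_n)$ of balls thrown, read in reverse. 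The task therefore reduces to characterizing which sequences $(x_1,\ldots,x_n) \in [b]^n$ arise this way, and showing that the count of valid sequences equals the stated sum of Stirling numbers.

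First I would make precise how a reverse-read throw sequence determines the cards and track the constraint. Working backwards from the final arrangement $[\sigma^{-1}(1),\ldots,\sigma^{-1}(b)]$, each time we ``un-apply'' a card $C_i$: the thrown ball currently sits at some level $i$ on the right, and on the left it moves to the bottom (level $1$), with everything below level $i$ shifting up by one. So reading backwards, throwing ball $x$ means: locate $x$ in the current arrangement, pull it to the bottom, push the balls formerly beneath it up by one. The key observation is that this backward step is \emph{always legal} — any ball can be thrown — so \emph{every} sequence in $[b]^n$ yields a valid juggling card sequence ending at the arrangement of $\sigma$. Hence $JS(\sigma,n,\mathcal{C})$ does \emph{not} depend on $\sigma$ at all beyond the parameter $L(\sigma)$, and the real content is: count the sequences $(x_1,\ldots,x_n)$ that, when run backwards from $[\sigma^{-1}(1),\ldots,\sigma^{-1}(b)]$, return to the identity arrangement $[1,2,\ldots,b]$ on the far left. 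Equivalently, running \emph{forwards}: starting from the identity, applying the cards dictated by $(x_n,\ldots,x_1)$ must land on $\sigma$.

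The heart of the argument is then to recognize this forward condition combinatorially. The balls that are ``never thrown'' during the whole sequence simply ride their tracks; because each card $C_i$ fixes levels $i+1,\ldots,b$ and only cyclically shifts the bottom $i$, a ball that is never thrown and ends up among the top $L(\sigma)$ positions (i.e.\ is one of $b-L(\sigma)+1,\ldots,b$) is forced to stay put, while the thrown balls get permuted according to the \emph{order in which they are last thrown} (the last throw of a ball determines where it finally lands relative to the others). This is the point where I expect the main obstacle: carefully showing that the final arrangement is determined exactly by the set partition of the time-steps $\{1,\ldots,n\}$ according to \emph{which ball is thrown}, together with a linear order on the blocks, and that the requirement ``final arrangement $=$ that of $\sigma$'' translates precisely into: the blocks must be at least $b-L(\sigma)$ in number (the balls $1,\ldots,b-L(\sigma)$ must each be thrown at least once so they can be maneuvered into their non-increasing-suffix positions, while the top $L(\sigma)$ balls may or may not be thrown), and once the partition into $k \ge b-L(\sigma)$ blocks is chosen, the ordering of blocks and the residual freedom is pinned down uniquely by $\sigma$. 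Granting that, the number of valid throw sequences is $\sum_{k \ge b-L(\sigma)} \big\{\!{n\atop k}\!\big\}$: choose the partition of $[n]$ by ``which ball is thrown when'', and there is exactly one way to assign ball-labels to blocks (respecting $\sigma$) for each such partition with enough blocks.

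Finally I would assemble the pieces: (i) by Proposition~\ref{prop:backone}, $A \mapsto$ (reverse throw sequence) is injective; (ii) surjectivity onto the set of ``admissible'' throw sequences, via the always-legal backward construction; (iii) the bijection between admissible throw sequences and set partitions of $[n]$ into $\ge b-L(\sigma)$ blocks described above; (iv) conclude by the definition of Stirling numbers of the second kind. I would sanity-check the formula against the examples already in the paper — the sequence in Figure~\ref{4-card_sequence} has $b=4$, $L(\sigma)=2$, $n=9$, so the count should be $\sum_{k=2}^{4}\big\{\!{9\atop k}\!\big\}$ — and against the known fact that the total number of period-$n$ juggling card sequences on $b$ balls should match $b^n$ when summed appropriately over arrangements, which provides a useful consistency check on the block-counting step.
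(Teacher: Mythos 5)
Your overall plan is the same as the paper's: use Proposition~\ref{prop:backone} to reduce a card sequence with prescribed final arrangement to its throw sequence, note that the backward reconstruction is always legal, and then count the throw sequences whose backward pass terminates at the identity by identifying them with set partitions of $[n]$ into at least $b-L(\sigma)$ blocks. Steps (i), (ii) and (iv) of your outline are fine. But step (iii) is exactly where the content of the theorem lives, you explicitly defer it as ``the main obstacle,'' and the two heuristics you offer for filling it are both false, so as written there is a genuine gap.

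Concretely: a never-thrown ball is not ``forced to stay put'' even when it ends among the top $L(\sigma)$ levels --- each time a thrown ball passes over it, the card shifts it down a level. (With $b=3$ the single card $C_3$ gives arrangement $[2,3,1]$ with $L(\sigma)=2$; ball $3$ is never thrown and finishes at level $2$, inside the top two levels, yet it has dropped from level $3$.) And the thrown balls are not ``permuted according to the order in which they are last thrown'': the forward throw sequence $\langle 1,2\rangle$ with $b=2$ is realized both by $C_2C_1$ (final arrangement $[2,1]$) and by $C_2C_2$ (final arrangement $[1,2]$), so the order of last throws determines nothing about the final relative positions --- those are free data encoded in the cards. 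The statement you actually need concerns \emph{first} throws and the \emph{left} end: in the backward pass every thrown ball is pulled to the bottom, so on the far left the $k$ thrown balls occupy the bottom $k$ levels ordered by their first (forward) throw times, while the $b-k$ never-thrown balls occupy the top $b-k$ levels in the same relative order they have in $\sigma$'s arrangement. Hence the left end is the identity iff (a) the balls listed by first throw are $1,2,\dots,k$ in order --- which is precisely the canonical indexing of the blocks of the partition by least element, giving a unique valid labeling per partition --- and (b) $\sigma(k+1)<\cdots<\sigma(b)$, i.e.\ $k\ge b-L(\sigma)$; maximality of $L(\sigma)$ forces $\sigma(b-L(\sigma))>\sigma(b-L(\sigma)+1)$, which gives the necessity of (b). With that lemma in place of your two claims, your argument closes and coincides with the paper's proof.
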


\begin{proof}
We establish a bijection between the partitions of $[n]$ into $k$ nonempty subsets $[n]=X_1 \cup X_2 \cup \cdots \cup  X_k$ where $b-L(\sigma)\le k\le b$ and juggling card sequences of length $n$ using cards from $\mathcal{C}$ with the final arrangement corresponding to $\sigma$.  Because such partitions are counted by the Stirling numbers of the second kind, the result will then follow.

Starting with a partition we first reindex the sets so that the minimal elements are in increasing order, i.e., $\min X_i<\min X_j$ for $i<j$.  We now place $n$ \emph{blank} cards, mark the final arrangement corresponding to $\sigma$ on the right of the final card, and then under the $i$-th card we write $j$ if and only if $i\in X_j$.

We interpret the labeling under the cards as the ball that is thrown at that card, in particular we will have that $k$ of the balls are thrown.  We can now apply Proposition~\ref{prop:backone} iteratively from the right hand side to the left hand side to determine the cards in the juggling card sequence, where we update our ordering as we move from right to left.

We claim that the final ordering that we will end up with on the left hand side is $[1,2,\ldots,b]$ so that this is a juggling card sequence which should be counted.  Looking at the proof of the proposition we see that at each step the only ball which changes position in the ordering is the ball which is thrown, and in that case the ball was thrown from the bottom of the ordering.  We now have two observations to make:
\begin{itemize}
\item For the $k$ balls that will be thrown they will move into the first $k$ slots in the ordering, and by the assumption of our indexing we have that the first $k$ balls are ordered, i.e., for $1\le i<j\le k$ the first occurrence when going from left to right of $i$ is before the first occurrence of $j$ so that $i$ will move below $j$.
\item The remaining balls will not have their relative ordering change.  However, by our assumption on $k$ we have that $k+1,\ldots,b$ are already in the proper ordering.
\end{itemize}

This establishes the map from partitions to juggling card sequences.  To go in the other direction, we take a juggling card sequence of length $n$ using our cards from $\mathcal{C}$, write down which ball is thrown under each card, and then form our sets for the partition by letting $X_i$ be the location of the cards where ball $i$ is thrown.  Because $\sigma(b-L(\sigma))>\sigma(b-L(\sigma)+1)$ it must be that at some time that the ball $b-L(\sigma)$ was thrown and therefore the number of sets in our partition is at least $b-L(\sigma)$.  This finishes the other side of the bijection and the proof.
\end{proof}

For the partition of $[9] = \{1,4,9\} \cup \{2,6\} \cup \{3,5,8\} \cup \{7\}$ with final arrangement $[3,1,4,2]$ the juggling card sequence which will be formed is the one given in Figure~\ref{4-card_sequence}.

\subsection{Throwing $m\ge 2$ balls at a time}
The proof readily generalizes to the setting where we catch and then immediately throw $m$ balls at a time.  What we need to do is to find the appropriate way to generalize the Stirling numbers of the second kind.

\begin{definition}
Given $n$ and $k$ let $X=\{x_1,x_2,\ldots,x_k\}$.  Then $\big\{\!{n\atop k}\!\big\}_{\!m}$ is the number of ways, up to relabeling the $x_i$, to form $Y_1,Y_2,\ldots,Y_n$ so that $Y_j=(x_{j_1},\ldots,x_{j_m})$ is an ordered subset of $X$ and each $x_i$ is in at least one $Y_j$.
\end{definition}

We note that $\big\{\!{n\atop k}\!\big\}_{\!1}=\big\{\!{n\atop k}\!\big\}$.  This can be seen by observing that each $Y_i$ is a single entry and then we form our partition by grouping the indices of the $Y_i$ which agree.  We now show that this gives the appropriate generalization.

\begin{theorem}
Let $b$ be the number of balls and $\mathcal{C}$ be the collection of all cards for which $m$ balls are thrown.  Then 
\[
    JS(\sigma, n, \mathcal{C}) = \sum_{k=b-L(\sigma)}^b \bigg\{\!{n\atop k}\!\bigg\}_{\!m}. 
\]
\end{theorem}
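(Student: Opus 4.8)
The plan is to run essentially the same bijection as in the proof of Theorem~\ref{fact1a}, now between juggling card sequences of length $n$ drawn from $\mathcal{C}$ with final arrangement $\sigma$ and the structures $(Y_1,\ldots,Y_n)$ enumerated (for each admissible $k$) by $\big\{\!{n\atop k}\!\big\}_{\!m}$. Given such a structure with $k$ distinct values, first fix a canonical representative of its relabeling class: scan the concatenation $Y_1Y_2\cdots Y_n$ from left to right and rename the values $1,2,\ldots,k$ in order of first appearance (this is the multiplex analogue of reindexing a set partition by increasing minima, and it is the unique such labeled representative). Then place $n$ blank cards, mark $\sigma$ on the right of the last one, and under the $j$-th card write the relabeled ordered list $Y_j$, interpreted as the balls thrown at that card. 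Since each $Y_j$ consists of $m$ distinct entries, Proposition~\ref{prop:backone} applies at every card, so processing the cards from right to left we recover a well-defined card $C_{S_j}\in\mathcal{C}$ together with an updated left-hand ordering at each step, producing a card sequence $A$ whose induced arrangement is $\sigma$ on the right.

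The substantive step is to check that the ordering reached on the far left is $[1,2,\ldots,b]$, and that this happens exactly when $b-L(\sigma)\le k$. I would isolate the following invariant for the backward pass: after Proposition~\ref{prop:backone} has been applied to cards $n,n-1,\ldots,j$, the balls occurring in $Y_j\cup\cdots\cup Y_n$ occupy the bottom levels, arranged in the order of their first appearance in $Y_jY_{j+1}\cdots Y_n$, while every ball not yet thrown sits above all of them and in the same relative order it had in $\sigma$'s arrangement. This follows by induction on the number of processed cards straight from the description in Proposition~\ref{prop:backone}: one step pulls the $m$ newly thrown balls (in their listed order) to the bottom and leaves every other ball's position relative to the other not-yet-thrown balls untouched; hence a never-thrown ball is never disturbed among the never-thrown balls and is always pushed above the thrown ones. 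Taking $j=1$: the thrown balls are precisely $\{1,\ldots,k\}$, and by the canonical labeling they land at the bottom in the order $1,2,\ldots,k$; the never-thrown balls are then exactly $\{k+1,\ldots,b\}$ and end up on top in their $\sigma$-order. So the far-left ordering is $[1,\ldots,b]$ iff $k+1,\ldots,b$ appear in increasing order in $\sigma$'s arrangement, i.e.\ $L(\sigma)\ge b-k$. (When $k=b$ this is vacuous, matching the top of the summation; and $\big\{\!{n\atop k}\!\big\}_{\!m}=0$ unless $m\le k\le nm$, so any out-of-range summands contribute nothing.)

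For the inverse map, take $A\in\mathcal{C}^n$ with final arrangement $\sigma$, record the ordered list of balls thrown under each card to get tuples $(Y_1,\ldots,Y_n)$, and note their relabeling class is counted by $\big\{\!{n\atop k}\!\big\}_{\!m}$ where $k$ is the number of distinct balls thrown. Applying the same invariant to $A$ itself — whose far-left ordering is $[1,\ldots,b]$ by hypothesis — forces the thrown balls to be a bottom segment $\{1,\ldots,k\}$ and the never-thrown balls $\{k+1,\ldots,b\}$ to occur in increasing order in $\sigma$, so $k\ge b-L(\sigma)$; thus $A$ is indeed produced by the forward map at a legal $k$. That the two maps are mutually inverse is then routine: Proposition~\ref{prop:backone} says the card is determined by the right ordering together with the list of thrown balls, and the canonical labeling is the unique representative of its class. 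Summing over $b-L(\sigma)\le k\le b$ gives the stated formula.

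I expect the main obstacle to be making the backward-pass invariant precise and its induction airtight: unlike the $m=1$ case, a single multiplex card moves several thrown balls simultaneously to scattered destinations and reshuffles the levels of the untouched balls, so one must argue carefully that ``relative order of the not-yet-thrown balls'' and ``thrown balls accumulated at the bottom in first-appearance order'' are the correct invariants and are genuinely preserved by each application of Proposition~\ref{prop:backone}.
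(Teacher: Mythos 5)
Your proof is correct and follows essentially the same route as the paper: the same canonical relabeling by first appearance in the concatenation $Y_1Y_2\cdots Y_n$, the same right-to-left reconstruction via Proposition~\ref{prop:backone}, and the same identification of when the far-left ordering is the identity via the condition $k\ge b-L(\sigma)$. The only difference is that you spell out the backward-pass invariant (thrown balls accumulate at the bottom in first-appearance order, unthrown balls keep their $\sigma$-order on top) more explicitly than the paper does, which is a welcome sharpening rather than a departure.
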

\begin{proof}
Suppose we are given $Y_1,\ldots,Y_n$ with $Y_j=(x_{j_1},\ldots,x_{j_m})$ an ordered subset of $\{x_1,\ldots,x_k\}$.  Then we first concatenate the $Y_j$ together and remove all but the \emph{first} occurrence of each $x_i$ leaving us with a list $Y'$.  By our assumptions we have that $Y'$ consists of $x_1,\ldots,x_k$ in some order.  For $Y_1,\ldots, Y_n$, we now replace $x_1,\ldots,x_k$ by $1,\ldots,k$ by replacing $x_i$ with $j$ if $x_i$ is in the $j$-th position of $Y'$.  (This process is equivalent to the reindexing carried out in the special case when one ball is thrown at a time.)

We now have $Y_1,\ldots,Y_n$ with each consisting of $m$ distinct numbers drawn from $\{1,\ldots,k\}$ with the property that if $i<j$ then $i$ appears before $j$ (i.e., in the sense that if the first occurrence of $i$ is in $Y_p$ and the first occurrence of $j$ is in $Y_q$ and then either $p<q$ or $p=q$ and $i$ appears in the list before $j$ in $Y_p$).  We now put down $n$ blank cards, write down the arrangement corresponding to $\sigma$ on the right side of the last card and write $Y_i$ under the $i$th card for all $i$.  The remainder of the proof then proceeds as before, i.e., we can now work from right to left and determine the card used at each stage by Proposition~\ref{prop:backone}.  The resulting process gives a valid juggling sequence because the initial arrangement will have the first $k$ balls in order (by our work on reindexing) and the final balls inherit their order, which by assumption were already in the correct order.

The map in the other direction is carried out as before, i.e., given a juggling card sequence under each card we write the balls which are thrown and use these to form $Y_1,\ldots,Y_n$ which contribute to the count of $\big\{\!{n\atop k}\!\big\}_{\!m}$ for some appropriate $k$.
\end{proof}

The value $\big\{\!{n\atop k}\!\big\}_{\!2}$ is found by counting sets of ordered pairs.  In particular, this counts the number of multi-digraphs with $n$ \emph{labeled} edges and $k$ vertices.  This leads to a bijection between these digraphs and juggling sequences for a given $\sigma$, provided $k\ge b-L(\sigma)$.  As an example consider the edge-labeled directed graph shown in Figure~\ref{digraph}.

\begin{figure}[htbf]
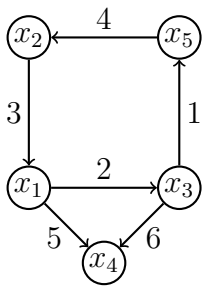

    \centering
    \picAF
    \caption{An edge labeled multi-digraph}
    \label{digraph}
\end{figure}

Using the edge labeling we can now form the sets so that $Y_1=(x_3,x_5)$, $Y_2=(x_1,x_3)$, $Y_3=(x_2,x_1)$, $Y_4=(x_5,x_2)$, $Y_5=(x_1,x_4)$ and $Y_6=(x_3,x_4)$.  We now need to label the $x_i$ with $1,2,3,4,5$ so that the first occurrences of each number (ball) is increasing.  To do this we first concatenate these lists together to form the following (i.e., the occurrences in order of all of the $x_i$):
\[
(x_3,x_5, x_1,x_3, x_2,x_1, x_5,x_2, x_1,x_4, x_3,x_4)
\]
From here we look at first occurrences of each $x_i$ which is found by removing all but the first occurrence of each symbol which gives us the following list.
\[
Y' = (x_3,x_5,x_1,x_2,x_4)
\]
Therefore to make sure we have the first occurrences in the proper order, we replace $x_3,x_5,x_1,x_2,x_4$ by $1,2,3,4,5$ respectively.  If we now set the final arrangement to be $[4,5,2,1,3]$ then we get the corresponding juggling card sequence shown in Figure~\ref{sequence_2}.

\begin{figure}[hftb]
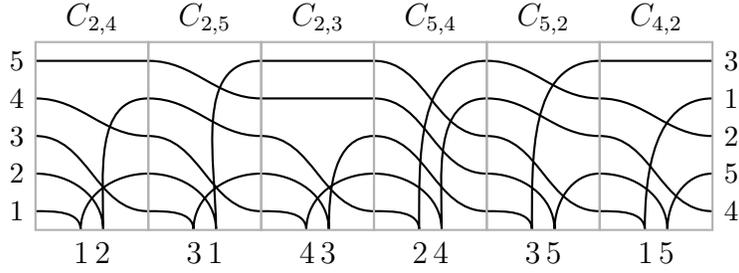

    \centering
    \picAE
    \caption{The juggling card sequence corresponding to the digraph from Figure~\ref{digraph} and final arrangement $[4,5,2,1,3]$}
    \label{sequence_2}
\end{figure}

This bijection gives us the following.

\begin{theorem}\label{thm:digraph}
Let $\sigma$ be a permutation of $1,2,\ldots,b$ and $b-L(\sigma)\le k\le b$.  Then there is a bijection between edge-labeled (multi-)digraphs without loops which have $n$ arcs on $k$ vertices and juggling card sequences $A$ of length $n$ where two balls are caught and thrown at a time, a total of $k$ balls are thrown, and satisfying $\pi_A=\sigma$.
\end{theorem}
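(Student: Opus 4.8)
The plan is to obtain this bijection by composing two bijections: the one already produced in the proof of the preceding theorem (in its $m=2$ instance), and an elementary dictionary between sequences of ordered pairs and edge-labeled loopless multi-digraphs. In other words, almost all of the work has been done, and what remains is to recognize that the combinatorial objects counted by $\big\{\!{n\atop k}\!\big\}_{\!2}$ \emph{are} edge-labeled loopless multi-digraphs.

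First I would recall that, for $m=2$, $\big\{\!{n\atop k}\!\big\}_{\!2}$ counts the data of an ordered list $Y_1,\ldots,Y_n$ in which each $Y_j=(x_{j_1},x_{j_2})$ is an ordered pair of distinct elements of $\{x_1,\ldots,x_k\}$ and every $x_i$ occurs in at least one $Y_j$, taken up to relabeling the $x_i$. Specializing the proof of the preceding theorem to $m=2$ gives, for each $k$ with $b-L(\sigma)\le k\le b$, a bijection between such lists and juggling card sequences $A$ of length $n$ in which two balls are caught and thrown at each step, a total of $k$ balls are thrown, and $\pi_A=\sigma$. So it suffices to identify these lists with the digraphs in the statement.

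For that, given such a list, build a digraph $D$ on vertex set $\{x_1,\ldots,x_k\}$ by inserting, for each $j\in[n]$, an arc from $x_{j_1}$ to $x_{j_2}$ and labeling it $j$; conversely, from an edge-labeled loopless multi-digraph on $k$ vertices, read off $Y_j=(\text{tail},\text{head})$ of the arc labeled $j$. Distinctness of the entries of each $Y_j$ is exactly looplessness; using each label $1,\ldots,n$ once corresponds to having $n$ distinctly labeled arcs; the requirement that every $x_i$ occur corresponds to having no isolated vertex; and the ``up to relabeling the $x_i$'' clause is precisely the statement that the vertices of $D$ carry no labels while the arcs do. These two constructions are visibly mutually inverse, so composing with the bijection from the preceding theorem yields the claim. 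I would also point to the worked example in Figures~\ref{digraph} and~\ref{sequence_2} to fix the orientation convention (an arc points from the first coordinate of $Y_j$ to the second). There is no serious obstacle here; the only point demanding care is the bookkeeping around the ``up to relabeling'' clause and the no-isolated-vertices condition, so that the count of digraphs on exactly $k$ vertices matches $\big\{\!{n\atop k}\!\big\}_{\!2}$ term by term.
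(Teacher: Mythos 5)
Your proposal is correct and follows essentially the same route as the paper: the paper likewise observes that the objects counted by $\big\{\!{n\atop k}\!\big\}_{\!2}$ are exactly edge-labeled loopless multi-digraphs on $k$ vertices (illustrating the dictionary with the worked example of Figures~\ref{digraph} and~\ref{sequence_2}) and then invokes the bijection from the preceding theorem with $m=2$. Your explicit attention to the ``up to relabeling'' clause and the no-isolated-vertex condition is a reasonable tightening of details the paper leaves implicit.
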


We note that the numbers $\big\{\!{n\atop k}\!\big\}_{\!m}$ have appeared recently in the literature in connection with the so-called \emph{boson normal ordering problem} arising in statistical physics \cite{boson_1,boson_2}. The sequence $\big\{\!{n\atop k}\!\big\}_{\!2}$ is {\tt A078739} in the OEIS \cite{oeis}. 

For general $m$ it has been observed \cite{hell} that $\big\{\!{n\atop k}\!\big\}_{\!m}$ is the number of ways to properly color the graph $nK_m$ using exactly $k$ colors, i.e., each $Y_i$ is the coloring on the $i$-th copy of $K_m$, and by definition all $k$ colors must be used.  

If we denote the \emph{falling factorial} $x^{\underline{m}} = x(x-1)(x-2) \cdots (x-m+1)$, then the ordinary Stirling numbers $\big\{\!{n\atop k}\!\big\}$ act as \emph{connection coefficients} between $x^{\underline{n}}$ and $x^n$ by means of the formula (e.g., see \cite{gkp})
\[
    x^n = \sum_{k=1}^n \bigg\{\!{n\atop k}\!\bigg\} x^{\underline{k}}.
\]
In particular, they satisfy the recurrence:
\[
    \bigg\{\!{n+1\atop k}\!\bigg\}= k\bigg\{\!{n\atop k}\!\bigg\} + \bigg\{\!{n\atop k-1}\!\bigg\},
\]
and have the explicit representation
\[
    \bigg\{\!{n\atop k}\!\bigg\} = \frac{(-1)^k}{k!} \sum_{i=1}^k (-1)^i \binom{k}{i} i^n.
\]
The $\big\{\!{n\atop k}\!\big\}_{\!m}$ satisfy analogs of these three relationship.  Namely, as connection coefficients
\begin{equation*}
   (x^{\underline{m}})^n = \sum_{k=m}^{mn} \bigg\{\!{n\atop k}\!\bigg\}_{\!m} x^{\underline{k}},
\end{equation*}
satisfying a recurrence
\begin{gather*}
   \bigg\{\!{n+1\atop k}\!\bigg\}_{\!m} = \sum_{i=0}^m \binom{k+i-m}{i} \,m^{\underline{i}} \,\bigg\{\!{n\atop k+i-m}\!\bigg\}_{\!m},
\end{gather*}
and with the explicit representation
\begin{equation*}
   \bigg\{\!{n\atop k}\!\bigg\}_{\!m} = \frac{(-1)^k}{k!} \sum_{i=m}^k (-1)^i \binom{k}{i}(i^{\underline{m}})^n.
\end{equation*}

\subsection{Throwing different numbers of balls at different times} 
We have restricted our analysis to the case when our collection of cards all catch and then throw the same number of balls.  We can relax this restriction and allow ourselves to catch and throw differing number of balls at each step.  For example, we could insist that at the $i$-th step that $m_i$ balls are thrown.

Under the card in the $i$-th position we place a sequence $Y_i= (y_{i,1}, y_{i,2}, \ldots, y_{i,m_i})$.  We then concatenate the labels as before to give a mapping from the $y_{i,j}$ to $[k]$ to give a compatible ball  assignment to the card positions.  Then we work from right to left and recover the unique juggling card sequence which corresponds to this collection of ordered sets. A variation of the preceding arguments show that the number of such card sequences is equal to the number of $k$-colorings of $\cup_{i=1}^n K_{m_i}$.

A much more complete analysis of this problem with connections to generalized Stirling numbers and the boson normal ordering problem appears in \cite{EGH}. A good survey of this general problem also can be found in of \cite[Ch.\ 10]{Man}.

\section{Preserving ordering while throwing}
In the preceding section when we threw multiple balls at one time, we did not worry about preserving the ordering of the balls which were thrown.  The goal of this section is to add the extra condition that the relative order of the thrown balls is preserved, e.g., for $m=2$ our set of cards will be the set of ${b\choose2}$ cards given by $\{C_{i,j}:1\le i<j\le b\}$.  We will see that this situation is more complicated than the one in the preceding section.

To begin the analysis, we start with a $2$-cover of the set $[n]$. This is a collection of $k$ (not necessarily distinct) subsets $S_i$  of $[n]$ with the property that each element $j$ of $[n]$ occurs in exactly two of the $S_i$. We can represent a $2$-cover by a $k \times n$ matrix $M$ where for $1 \leq i \leq k$, $1 \leq j \leq n$, we have $M(i,j) = 1$ if $j \in S_i$, and $M(i,j) = 0$ otherwise.  For each set $S_i$ we will associate a \emph{virtual ball} $x_i$. For $1 \leq j \leq n$, we define the 2-element set $B_j = \{x_i : j \in S_i\}$. In other words, $x_i \in B_j$ if and only if $M(i,j) = 1$. The interpretation is that at time $j$, the two virtual balls $x_i \in B_j$ will be the balls that are thrown at that time.

We now produce the (unique) mapping  between the actual balls and the virtual balls $x_i$. To do this, we define a partial order on the $x_i$ as follows: $x_u$ is less than $x_v$, written as $x_u \prec x_v$, if among all the $B_i \neq \{x_u, x_v\}$, $x_u$ occurs \emph{before} $x_v$ (i.e., with a lower indexed  $B_i$). If there are no such $B_i$, we say that $x_u$ and $x_v$ are equivalent.

As an example, a $2$-cover of $[7]$ with five subsets is given by the following matrix.
\[
M=\bordermatrix{
~&B_1&B_2&B_3&B_4&B_5&B_6&B_7 \cr
x_1&1&0&1&0&0&0&1 \cr
x_2&0&1&0&0&1&0&0 \cr
x_3&1&0&0&1&0&1&0 \cr
x_4&0&1&0&0&1&0&0 \cr
x_5&0&0&1&1&0&1&1}
\]
We have labeled the rows of $M$ with the $x_i$ and the columns with the $B_j$. Thus, we see that  $x_2$ and $x_4$ are equivalent, so that the partial order on the $x_i$ is 
\begin{equation*}
    x_1 \prec x_3 \prec {x_2 \equiv x_4} \prec x_5.
\end{equation*}

If in the current arrangement we have that $u$ is below $v$, then $v$ cannot be thrown before $u$ (though it might possibly be at the same time).  Therefore the partial order on the $x_i$ determines how the balls are positioned relative to one another. The partial order doesn't specify anything about the relative order of equivalent $x_i$ but because such pairs are always thrown together, their relative order never changes during the process of traversing all the cards in the sequence.

In Figure~\ref{fig5} we show the sequence generated by the $2$-cover from $M$,  where we assume the finishing arrangement of the $x_i$ is from bottom to top $x_4,x_1,x_5,x_3,x_2$. This choice was arbitrary, except that the initial and terminal orders of the equivalent pair $x_2$ and $x_4$ must be the same, since there is a unique initial sequence which can have the $x_i$ in $B_j$ being thrown at time $j$, namely, the sequence that is consistent with the partial order $\prec$ on the $x_i$. To determine the appropriate cards needed for the required throwing patterns it is simply a matter of starting at the right hand side and choosing the cards sequentially which achieve the required throws. In Figure~\ref{fig5}, we have also have indicated the corresponding cards $C_{i,j}$ which accomplish the indicated throws.

\begin{figure}[htbf]
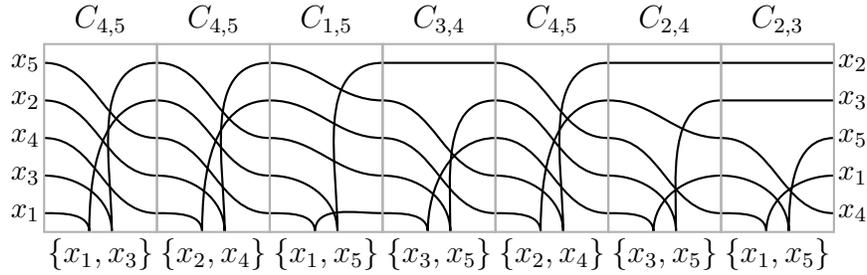

    \centering
    \picAG
    \caption{A card sequence for the matrix $M$}
    \label{fig5}
\end{figure}
 
If we now make the identification $x_1\rightarrow1, x_3 \rightarrow2, x_4 \rightarrow 3,  x_2 \rightarrow 4, x_5 \rightarrow 5$, then we have the picture shown in Figure~\ref{fig6}.

\begin{figure}[ht]
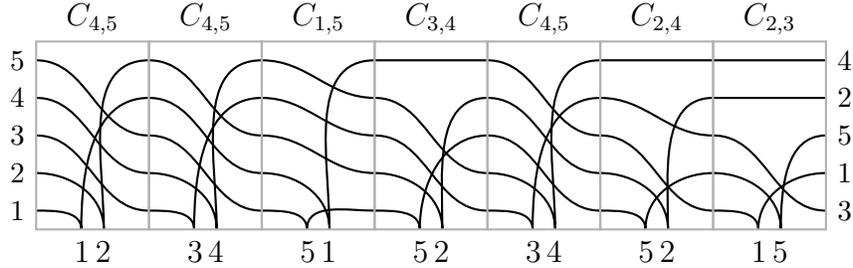

    \centering
    \picAH
    \caption{A card sequence for the matrix $M$ using actual balls}
    \label{fig6}
\end{figure}

We can achieve any permutation $\sigma$ of the balls $\{1,2,3,4,5\}$ starting in increasing order provided only that $\sigma(2)$ is below $\sigma(4)$. 

For general $n$ and $k$, given a $2$-cover of $[n]$ with $k$ sets $S_1, \ldots, S_k$, there is an induced partial order on the sets (or what we called virtual balls). For any terminal permutation $\sigma$ which preserves the relative order of equivalent balls, there is a unique sequence of cards which achieves this permutation. 

As pointed out in \cite{cps}, there is a direct correspondence between 2-covers of $[n]$ with $k$ subsets and multigraphs $G(n,k)$ having $k$ vertices and $n$ labeled edges.  In the case of graphs, the vertices of $G$ will be $\{x_1, x_2, \ldots, x_k\}$. We insert the edge $\{x_r, x_s\}$ with label $i$ if the $i$-th column of  $M$ has $1$'s in rows $r$ and $s$. The number of vertices of such an edge-labeled multigraph corresponds to the number of balls which are thrown. These are enumerated by the numbers of vertices and labeled edges in \cite{labele} (see also {\tt A098233} in the OEIS \cite{oeis}). We illustrate this connection in Figure~\ref{multigraphs} where we show the three edge-labeled multigraphs on two edges and the corresponding card sequences which generate the identity permutation.

\begin{figure}[htp]
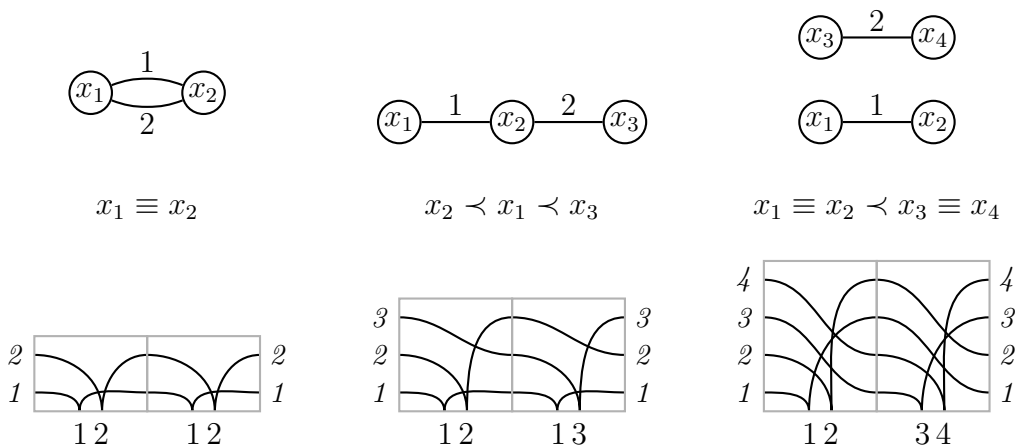

    \centering
\begin{tabular}{c@{\qquad}c@{\qquad}c}
\picBA & \picBB & \picBC \\[12pt]
$x_1\equiv x_2$ & $x_2\prec x_1 \prec x_3$ & $x_1\equiv x_2 \prec x_3 \equiv x_4$ \\[12pt]
\picBD & \picBE & \picBF
\end{tabular}
    \caption{Edge-labeled multigraphs with two edges, and the corresponding card sequences}
    \label{multigraphs}
\end{figure}

In the special case that the desired permutation $\pi_A =\sigma = \text{id}$, the identity permutation, then \emph{any} $2$-cover can generate this permutation.  This gives the following result (which should be compared with Theorem~\ref{thm:digraph}).

\begin{theorem}\label{thm:graph}
Let $b$ be the number of balls.  Then there is a bijection between edge-labeled (multi-)graphs without loops which have $n$ edges on $b$ vertices and juggling card sequences $A$ of length $n$ where two balls are caught and thrown at a time and the relative ordering of the thrown balls is preserved, where all $b$ of the balls are thrown, and satisfying $\pi_A=\text{id}$.
\end{theorem}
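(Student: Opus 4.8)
The plan is to write down the bijection in both directions and then check the two maps are mutually inverse, reusing the structural facts about $2$-covers and the partial order $\prec$ developed above in this section together with the backward-reconstruction idea of Proposition~\ref{prop:backone}, exactly as in the proofs of Theorems~\ref{fact1a} and~\ref{thm:digraph}. Here an edge-labelled multigraph ``on $b$ vertices'' means one whose $n$ edges carry labels $1,\dots,n$ but whose vertices are unlabelled (the convention under which such multigraphs correspond to $2$-covers of $[n]$ with $b$ sets taken up to relabelling the sets), and we require no isolated vertices. From a card sequence $A$ of the stated type --- two balls caught and thrown at each step with relative order preserved, $\pi_A=\text{id}$, all $b$ balls thrown --- we obtain a multigraph by tracking the balls from the initial arrangement $[1,2,\dots,b]$ and declaring, for each $j$, that the edge labelled $j$ joins the two balls at levels $1$ and $2$ just before card $j$ is applied; these are distinct, so there are no loops, there are $n$ labelled edges, every vertex has positive degree since every ball is thrown, and forgetting the names of the balls gives an edge-labelled multigraph on exactly $b$ (unlabelled) vertices with no isolated vertex.

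Conversely, given such a multigraph, realise it as a $2$-cover of $[n]$ with $b$ sets (attach virtual balls $x_1,\dots,x_b$), form the $2$-element sets $B_j$, and compute the partial order $\prec$ on the $x_i$ as above. Identify the virtual balls with the actual balls $1,\dots,b$ by picking a linear order refining $\prec$ and breaking any tie between two equivalent virtual balls by their index; this identification is determined by the multigraph alone, and, since equivalent virtual balls are thrown only with each other, the choice of tie-break changes nothing below it. Then lay down $n$ blank cards, mark $[1,2,\dots,b]$ on the far right, write $B_j$ (through the identification) under the $j$th card, and apply Proposition~\ref{prop:backone} iteratively from right to left --- here the unordered pair $B_j$ suffices, because order-preserving throws move the two balls to their two occupied right-levels in increasing order --- to get the unique card sequence $A$ realising these throws with this terminal arrangement. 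By the observation made just above that when $\sigma=\text{id}$ any $2$-cover may be used, the left arrangement of $A$ is again $[1,2,\dots,b]$, so $\pi_A=\text{id}$; by construction all $b$ balls are thrown with relative order preserved.

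It remains to see these two maps are inverse. The second map does not depend on the chosen realisation as a $2$-cover, since a different realisation only permutes the $x_i$ while $\prec$ is intrinsic. Going from a card sequence to a multigraph and back, one checks that the multigraph of thrown pairs coming from a sequence with $\pi_A=\text{id}$ is already ``$\prec$-sorted'' (the order $\prec$ on its vertices $1,\dots,b$ is refined by the natural order) --- the same first-occurrence bookkeeping as in the proof of Theorem~\ref{fact1a}, now on pairs --- so the canonical relabelling is trivial and the backward reconstruction returns the original sequence, using that the card sequence realising a given throw pattern with $\pi_A=\text{id}$ is unique. Going from a multigraph to a card sequence and back, the sequence built from a multigraph has exactly that multigraph (after canonical labelling) as its thrown pairs, so forgetting names returns the original abstract multigraph. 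The part needing the most care is the treatment of equivalent virtual balls: these are precisely the components that are a single pair of vertices joined only by parallel edges, and their internal order is invisible to the construction, so one must check that fixing the tie-break once and for all both keeps the second map single-valued and confines the image of the first map to the ``$\prec$-sorted'' multigraphs --- which is exactly what makes the maps mutually inverse. With that in hand, ``no isolated vertex'' corresponds to ``all $b$ balls thrown'' at once, and the theorem follows.
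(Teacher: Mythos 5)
Your proposal is correct and follows essentially the same route as the paper: the paper presents this theorem as a direct consequence of its preceding discussion of $2$-covers, the partial order $\prec$ on virtual balls, and the unique right-to-left reconstruction of the card sequence from a throw pattern and terminal arrangement, together with the observation that for $\sigma=\text{id}$ every $2$-cover is realizable. You have merely made explicit two points the paper leaves implicit --- that the multigraphs must be taken with unlabelled vertices (equivalently, $2$-covers up to reindexing the sets) and that the tie-break between equivalent virtual balls is immaterial because such a pair forms a component joined only by parallel edges --- both of which are handled correctly.
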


The asymptotic behavior of the number of $2$-covers of an $n$-set, denoted $Cov(n)$,  has been studied in \cite{cps}. In particular, it is shown there that
\[
Cov(n) \sim B_{2n}2^{-n}\exp \left(-\frac{1}{2} \log\left(\frac{2n}{\log n}\right)\right)
\]
where $B_{2n}$ is the well-known Bell number (see \cite{amo}).

Counting the number of juggling card sequences which generate permutations other than the identity is more complicated.

In the more general case of throwing $m \geq 3$ balls, we want to consider  $m$-covers of the set $[n]$. An $m$-cover of $[n]$ is a collection of $k$ (not necessarily distinct) subsets $S_i$  of $[n]$ with the property that each element $j$ of $[n]$ occurs in exactly $m$ of the $S_i$. As before, we can represent the $m$-cover by a $k \times n$ matrix $M$ where for $1 \leq i \leq k$, $1 \leq j \leq n$, $M(i,j) = 1$ if $j \in S_i$, and $M(i,j) = 0$ otherwise.

The same analysis holds in this case of general $m$ as in the case of $m=2$. Namely, for each subset $S_i$ in the $m$-cover, we can associate a virtual ball $x_i$. Then we can use the sets $B_j$ corresponding to the columns of $M$ to induce a partial order $\prec$ on the $x_i$. As before, any permutation $\sigma$ on $[k]$ which respects the order of equivalent elements can be achieved by a unique sequence of cards. In the case that $\sigma$ is the identity permutation, then any $m$-cover of $[n]$ is able to generate this permutation with an appropriate sequence of cards.  In this case the number of such juggling card sequences is the number of hyperedge-labeled multi-hypergraphs, (similar to the edge-labeled multigraphs for the case $m=2$).

\section{Juggling card sequences with minimal crossings}
We now return to throwing a single ball at a time.  Any juggling card sequence of $n$ cards will produce a valid siteswap sequence which has period $n$.  However most such siteswap sequences will result in having the balls be permuted amongst themselves after $n$ throws.  So one natural family to focus on are those which satisfy $\pi_A=\text{id}$, i.e., after $n$ throws the same balls are in the same position to repeat.

Suppose now we follow the balls as they traverse the cards of some sequence $A$.  Then when a card $C_k$ is used, we see that the path of the thrown ball has $k-1$ ``crossings'' in that card, i.e., locations where the tracks intersect.  For a sequence $A = C_{i_1} C_{i_2} \ldots C_{i_n}$, the total number of crossings is $Cr(A)=\sum (i_k-1)$.  In the case when a juggling card sequence has $b$ balls, uses the card $C_b$, and has $\pi_A = \text{id}$, then the number of crossings satisfies $Cr(A) \geq b(b-1)$.  To see this we note that \emph{every} ball must be thrown (i.e., we throw something up to track $b$ which moves $b$ down and so we must eventually have a throw that returns $b$ to the top).  In particular, the paths of each pair of balls $i$ and $j$, with $i \neq j$, must cross at least twice.

We will say a juggling card sequence $A$ is a \emph{minimal crossing juggling card sequence} if the sequence has $b$ balls, uses the card $C_b$, has $\pi_A=\text{id}$, and $Cr(A)=b(b-1)$.  The goal of this section is to count the number of minimal crossing juggling card sequences.  In the process we will give a structure result that can give a bijective relationship with Dyck paths.

\subsection{Bijection with Dyck paths}
Dyck paths are one of many well known combinatorial objects that are connected with the Catalan numbers.  Many of these objects can be decomposed into two smaller (possibly empty) objects with the same properties; and we start by showing that this is the case with minimal crossing juggling card sequences.

\begin{lemma}\label{lem:recurrence}
Given a minimal crossing juggling card sequence $A$ with $b$ balls using $n$ cards, there is a \emph{unique} pair of minimal crossing juggling card sequences $(B,C)$ so that $B$ uses $k$ balls, and $m$ cards and $C$ uses $b-k$ balls and $n-m-1$ cards (with the possibility that $B$ or $C$ might be empty).  Further, given any such pair of minimal crossing juggling card sequences $(B,C)$, the minimal crossing juggling card sequence $A$ can be determined.
\end{lemma}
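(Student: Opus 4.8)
The plan is to treat this as the juggling analogue of the first-return decomposition of a Dyck path, and to read the cut off from the (forced) behaviour of the cards $C_b$. I would begin by recording the structural consequences of minimality that follow from the paragraph preceding the lemma. Since $A$ uses $C_b$ and $\pi_A=\mathrm{id}$ with $Cr(A)=b(b-1)$, and every pair of balls is known to cross at least twice, in fact \emph{every} pair of balls crosses exactly twice. Hence for each ball $\ell$, the number of times $\ell$ is bumped down equals the number of crossings in which $\ell$ is not the thrown ball, and (as $\ell$ starts and ends at level $\ell$) this equals the total height it is thrown up; counting both sides, $\ell$ is bumped down exactly $b-1$ times. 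In particular ball $b$ sits frozen at level $b$ until the first $C_b$ (the only card that can bump the top level), is thrown exactly once, and that throw is a full throw to level $b$ realized by the \emph{last} occurrence of $C_b$; after it, levels $1,\dots,b-1$ evolve on their own and must return balls $1,\dots,b-1$ to the identity order.

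Next I would define the pivot. Using these facts, I expect the correct cut to be located at a distinguished occurrence of $C_b$ (playing the role of the first up-step together with its matching down-step): following the trajectory of the ball that rises to the top there, one identifies a ``bottom block'' of $k$ balls whose cards form $B$ and a complementary block of $b-k$ balls whose cards, relabelled $1,\dots,b-k$, form $C$, with exactly one card — the pivot — consumed in gluing the two blocks. The technical heart is to verify that each of $B$ and $C$ is again a minimal crossing juggling card sequence: that the restriction of $A$ to a block is an honest card sequence (the relabelling of Proposition~\ref{prop:backone}-type data is consistent), that its permutation is the identity (the block order resets, using that the ambient order resets and that the pivot is placed exactly where ball-$b$'s descent/ascent forces it), that it uses its own top card $C_k$ resp.\ $C_{b-k}$, and — crucially — that its crossing count is exactly $k(k-1)$ resp.\ $(b-k)(b-k-1)$. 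The last point is where the bookkeeping must be done carefully: since no crossing of $A$ may be lost or created in the restriction, one checks $k(k-1)+(b-k)(b-k-1)+(\text{crossings at the pivot and between the two blocks})=b(b-1)$, and the identity $b(b-1)-k(k-1)-(b-k)(b-k-1)=2k(b-k)$ forces exactly the right amount of inter-block crossing, pinning everything down. The degenerate cases $k=0$ and $k=b$ (an empty $B$ or $C$) should fall out of the same computation, with the pivot then reducing to a $C_1$ or a bare $C_b$.

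For uniqueness I would argue that the pivot's position — and hence the split $k\mid b-k$ — is determined by $A$: choosing an earlier $C_b$ as pivot would leave ball $b$ (or the corresponding block) without enough bumps in its budget of $b-1$ to get home, and choosing a later one would destroy the reset of the lower block, so only one choice survives; the internal structure of $B$ and $C$ is then read directly from $A$. For the converse map, given minimal crossing $(B,C)$ of the stated sizes I would lift the cards of $B$ to act on the bottom $k$ levels of a $b$-level diagram, splice in the single pivot $C_b$ at the forced location, and insert the relabelled cards of $C$ on the remaining $b-k$ levels, then check $\pi_A=\mathrm{id}$, $Cr(A)=b(b-1)$, and that this is inverse to the forward construction — essentially the forward crossing/permutation bookkeeping run backwards. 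I expect the main obstacle to be exactly that $B$ and $C$ are \emph{not} a prefix and a suffix of $A$: their cards are interleaved, so the real work is to say precisely which cards of $A$ belong to which block (this is what the $C_b$-pivot and the bump structure of ball $b$ are for) and then to show the restriction of $A$ to each block is a bona fide minimal crossing sequence on fewer balls.
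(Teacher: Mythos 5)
Your preliminary observations are sound (every pair of balls crosses exactly twice; each ball is bumped down exactly $b-1$ times; ball $b$'s only nontrivial throw is via the last $C_b$), but the core of the proof --- actually exhibiting the decomposition --- is missing, and your specific guesses about where it lives point the wrong way. The paper's pivot is not an occurrence of $C_b$ but the \emph{first card} of $A$: it throws ball $1$ to some level $k+1$, and that $k$ is the number of balls of $B$. The cut between $B$ and $C$ sits at the \emph{second throw of ball $1$}. The decisive structural fact --- exactly the one you flag as ``the main obstacle'' and leave unresolved --- is that the cards of $B$ and $C$ are \emph{not} interleaved. Ball $1$ crosses each of balls $2,\dots,k+1$ on the first card; each of them must cross ball $1$ a second time before ball $1$ can return to level $1$ to be thrown again; and since each pair crosses exactly twice, none of $2,\dots,k+1$ can be thrown after ball $1$'s second throw. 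Meanwhile balls $k+2,\dots,b$ cannot reach level $1$ during that window, because they sit above ball $1$ and only a throw of ball $1$ could put them below it. Hence $A$ factors literally as (first card)$\cdot B'\cdot C'$ with $B'$ and $C'$ contiguous blocks, obtained from $B$ and $C$ by raising the height of each ball's last throw. Your plan to cut at a distinguished $C_b$ and follow ball $b$ does not produce this: in the paper's example $C_3C_5C_1C_5C_2C_5C_2C_5$ the consumed glue card is the initial $C_3$, and occurrences of $C_b=C_5$ appear inside \emph{both} blocks (one per ball whose last throw was elevated), so no single $C_b$ marks the splice point.

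Two smaller problems: ball $b$ need not be thrown exactly once (it can be thrown repeatedly by $C_1$ while parked at level $1$, e.g.\ $C_2C_1C_1C_2$ for $b=2$), and the cards after the last $C_b$ need not form a minimal crossing sequence on $b-1$ balls --- the lower balls are generally not in identity order at that moment --- so the ``levels $1,\dots,b-1$ evolve on their own and reset'' step does not hand you a usable sub-object. Your count $b(b-1)-k(k-1)-(b-k)(b-k-1)=2k(b-k)$ is correct and is consistent with the true decomposition (ball $1$ and each of $k+2,\dots,b$ crosses each of $2,\dots,k+1$ twice), but it cannot by itself locate the cut. Without the ball-$1$/first-card analysis, neither the existence nor the uniqueness of the pair $(B,C)$ is established.
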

\begin{proof}
The first card of $A$ will throw the ball up to some level $k+1$ and will thus cross paths with balls $2,\ldots,k+1$.  By the time that the first ball is thrown the second time, the first ball will have had to cross paths with balls $2,\ldots,k+1$ a second time.  Because each pair of balls can only cross twice it must be that the ball $1$ will never again cross with balls $2,\ldots,k+1$.  In particular, we will never throw balls $2,\ldots,k+1$ after we throw ball $1$ the second time.  From this we conclude that all the crossings between balls $2,\ldots,k+1$ will occur between the first two throws of ball $1$ and that the relative ordering of balls $2,\ldots,k+1$ will be set when we get to the second throw of ball $1$.

So between the first two throws of ball $1$, if we ignore balls $1,k+2,\ldots,b$ then we have a juggling card sequence for $k$ balls with $k(k-1)$ crossings with the final arrangement corresponding to the identity.

If we now ignore balls $2,\ldots,k+1$ from the second throw of ball $1$ until the end then we must again have all of the $(b-k)(b-k-1)$ crossings among the remaining balls with the final arrangement corresponding to the identity.

We can now conclude that every juggling card sequence that we want to count can be broken into the following three parts:
\begin{itemize}
\item The first card which throws ball $1$ to height $k+1$.
\item The set of cards between the first two occurrences of the throw of ball $1$; a juggling card sequence with $m$ cards and $k$ balls having $k(k-1)$ crossings and corresponding to the identity arrangement.  We denote this minimal crossing juggling card sequence by $B$.
\item The set of cards from the second time ball $1$ is thrown to the end; a juggling card sequence with $n-m-1$ cards and $b-k$ balls having $(b-k)(b-k-1)$ crossings and corresponding the identity arrangement.  We denote this minimal crossing juggling card sequence by $C$.
\end{itemize}

The first card can be found by knowing the number of balls used in $B$, so therefore we only need to know $B$ and $C$.  Further, given the above information, we can reconstruct the juggling card sequence for $A$.  Namely, we have the first card.  For the next set of cards as determined by $B$, we initially add balls $1,k+2,\ldots,b$ on top of the balls $2,\ldots,k+1$ and then we continue with the same cards as before \emph{except} for the last time each ball is thrown we increase the height of the throw to move above $1,k+2,\ldots,b$, i.e., the card $C_t$ will be replaced by $C_{t+b-k}$.  For the last set of cards as determined by $C$, we do the same process where we initially add balls $2,\ldots,k$ on the top and then we continue with the same cards as before \emph{except} for last time each ball $k+2,\ldots,b$ is thrown we increase the height of the throw to move above $2,\ldots,k+1$, i.e., the card $C_t$ will be replaced by $C_{t+k}$.
\end{proof}

To help illustrate the correspondence used in Lemma~\ref{lem:recurrence} in Figure~\ref{fig:recurin} we give two juggling card sequences with minimal crossings, one for $2$ balls and $3$ cards and the other for $3$ balls and $4$ cards.  In Figure~\ref{fig:recurout} we give the corresponding juggling card sequence; to help emphasize the structure we shade the portion of the balls which move in unison according to the construction in the lemma in the parts coming from $B$ and $C$.

\begin{figure}[hftb]
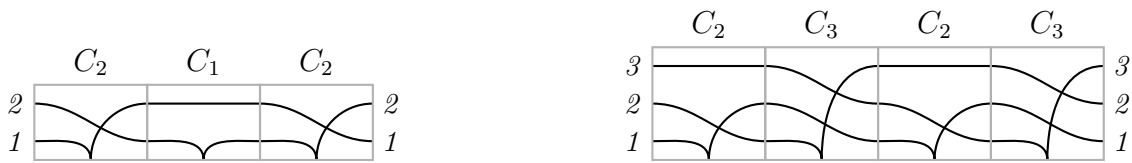

\centering
\picDA
\hfill
\picDB
\caption{Two minimal crossing juggling card sequences}
\label{fig:recurin}
\end{figure}

\begin{figure}[hftb]
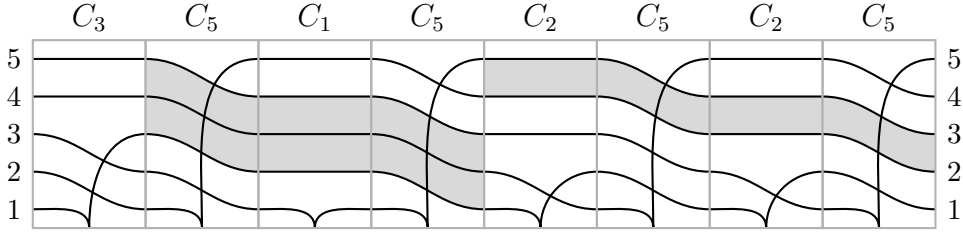

\centering
\picDC
\caption{The result of combining the two sequences in Figure~\ref{fig:recurin}}
\label{fig:recurout}
\end{figure}

Let us suppose that we indicate the preceding correspondence in the following way, if $B$ and $C$ are the minimal crossing juggling card sequences that generate the minimal crossing juggling card sequence $A$ then we write this as $A=(B)C$.  So that the example from Figures~\ref{fig:recurin} and \ref{fig:recurout} would be written as
\[
C_3C_5C_1C_5C_2C_5C_2C_5=(C_2C_1C_2)C_2C_3C_2C_3.
\]
Now we simply apply this convention recursively to each minimal crossing juggling card sequence, following the rule that if one part is empty we do not write anything.  So $(*)$ would be a juggling card sequence where ball $1$ does not return until the last card, $()*$ would be a juggling card sequence where the first card is $C_1$, and $()$ corresponds to the unique minimal juggling card sequence consisting of a single card, $C_1$.  If we now carry this out on the above example we get the following:
\begin{align*}
C_3C_5C_1C_5C_2C_5C_2C_5&=(C_2C_1C_2)C_2C_3C_2C_3\\
&=((C_1C_1))(C_1)C_2C_2\\
&=((()C_1))(())(C_1)\\
&=((()()))(())(())
\end{align*}
This leads naturally to Dyck paths by associating ``$($'' with an up and to the right step and ``$)$'' with a down and to the right step, which in our example gives the Dyck path shown in Figure~\ref{fig:Dyck}.  This process can be reversed (working from right to left and inside to outside), giving us a bijection between these minimal crossing juggling card sequences and Dyck paths.

\begin{figure}[htfb]
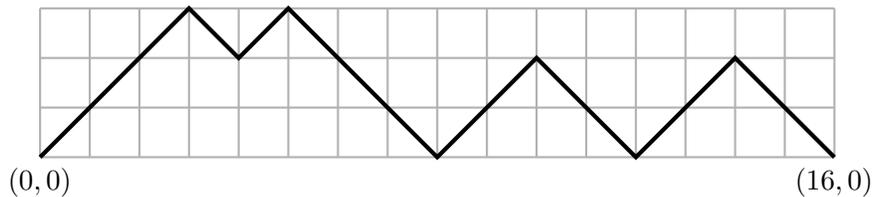

\centering
\picFA
\caption{The Dyck path for the juggling sequence in Figure~\ref{fig:recurout}}
\label{fig:Dyck}
\end{figure}

Careful analysis of the bijection shows that a juggling card sequence with $b$ balls and $n$ cards will produce a Dyck path from $(0,0)$ to $(2n,0)$ which has $n+1-b$ peaks.  This latter statistic on Dyck paths is counted by the Narayana numbers (see {\tt A001263} in \cite{oeis}).  Establishing the following theorem (a generating function proof of which will be given later in this section).

\begin{theorem}\label{thm:narayana}
The number of minimal crossing juggling card sequences with $b$ balls and $n$ cards is 
\[
f(n,b)=\frac1b\binom{n-1}{b-1}\binom{n}{b-1} = \frac1n{n\choose b}{n\choose b-1}, 
\]
the Narayana numbers.
\end{theorem}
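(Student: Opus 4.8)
The plan is to extract from Lemma~\ref{lem:recurrence} a functional equation for the bivariate generating function of the numbers $f(n,b)$ and then read off its coefficients by Lagrange inversion, recovering the Narayana numbers. Throughout let $f(n,b)$ denote the number of minimal crossing juggling card sequences with $b$ balls and $n$ cards, and set $F(x,y)=\sum_{n\ge 1}\sum_{b\ge 1} f(n,b)\,x^n y^b$.

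First I would convert the decomposition $A=(B)C$ of Lemma~\ref{lem:recurrence} into a recurrence for $f$, organized according to whether $B$ and $C$ are empty. If both are empty then $A=C_1$: one card and one ball. If $B$ is empty and $C$ is not, the leading card is $C_1$ and $A=C_1C$ has one more card than $C$ with the same number of balls. If $C$ is empty and $B$ is not, then $A=(B)$ has one more card and one more ball than $B$, the extra ball being ball $1$, which the leading card $C_{k+1}$ with $k=b_B$ lifts to the new top level $k+1=b_A$. If both $B$ and $C$ are nonempty, then $A$ has $n_B+n_C+1$ cards while its $b$ balls split as $b_B+b_C$, since the $b_B$ balls of $B$ occupy levels $2,\dots,b_B+1$ and the $b_C$ balls of $C$, one of which is ball $1$, occupy levels $1,b_B+2,\dots,b$. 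Passing to generating functions, these four contributions are $xy$, $xF$, $xyF$, and $xF^2$, so
\[
F \;=\; xy + xF + xyF + xF^2 \;=\; x(1+F)(y+F).
\]

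Next I would apply Lagrange inversion to $F=x\phi(F)$ with $\phi(w)=(1+w)(y+w)$, obtaining
\[
[x^n]F \;=\; \frac1n\,[w^{n-1}]\,\phi(w)^n \;=\; \frac1n\,[w^{n-1}]\,(1+w)^n(y+w)^n .
\]
Expanding $(1+w)^n=\sum_i\binom ni w^i$ and $(y+w)^n=\sum_j\binom nj y^{n-j}w^j$, picking out the coefficient of $w^{n-1}$ and then of $y^b$ (which forces $j=n-b$ and $i=b-1$) gives
\[
f(n,b) \;=\; \frac1n\binom{n}{b-1}\binom{n}{b},
\]
and the elementary identity $\frac1b\binom{n-1}{b-1}=\frac1n\binom nb$ rewrites this as $\frac1b\binom{n-1}{b-1}\binom{n}{b-1}$, the claimed Narayana number.

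The step I expect to be delicate is the first one: reading the recurrence correctly off Lemma~\ref{lem:recurrence}, and in particular getting the ball bookkeeping right in the degenerate cases, where $()$ denotes $C_1$ (one ball, even though both parts are empty) and the one-sided-empty patterns $(*)$ and $()*$ add a ball on exactly one side. Once the functional equation $F=x(1+F)(y+F)$ is in place the rest is routine formal-power-series manipulation. As a consistency check this matches the Dyck path bijection already described: a sequence with $b$ balls and $n$ cards maps to a Dyck path of semilength $n$ with $n+1-b$ peaks, and the Narayana numbers count Dyck paths by semilength and number of peaks, since $N(n,n+1-b)=\frac1n\binom{n}{b-1}\binom nb$ by the same computation; alternatively one could finish the proof purely combinatorially by verifying the ``$n+1-b$ peaks'' claim by induction along the $A=(B)C$ decomposition and invoking the peak refinement of the Catalan numbers.
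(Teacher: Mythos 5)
Your proof is correct. The functional equation $F = x(1+F)(y+F)$ that you extract from Lemma~\ref{lem:recurrence} is, after swapping the roles of the two variables, exactly the paper's equation \eqref{eq3}, namely $y\big(F(x,y)\big)^2 = (1-y-xy)F(x,y) - xy$, and your bookkeeping of the four empty/nonempty cases for $(B,C)$ correctly produces the weights $xy$, $xF$, $xyF$, $xF^2$ (the delicate point you flag --- that an ``empty'' $C$ still carries ball $1$, so the $(B)$ case gains both a card and a ball --- is handled correctly, and is consistent with the paper's treatment, where the same decomposition is phrased in terms of reduced ball-throwing patterns $P^*=\langle 1,C\rangle$ versus $\langle 1,C_1,C_2\rangle$ and yields the recurrence $F_b = \tfrac{y}{1-y}F_{b-1} + \sum_{i+j=b}\tfrac{y}{1-y}F_iF_j$). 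The only real divergence is the endgame: the paper solves the quadratic explicitly, expands the square root as a binomial series, extracts the coefficient of $x^by^n$ as an alternating sum, and then must verify a Vandermonde-type identity to recognize the Narayana numbers, whereas your application of Lagrange inversion to $F=x\phi(F)$ with $\phi(w)=(1+w)(y+w)$ reads off $f(n,b)=\tfrac1n\binom{n}{b-1}\binom{n}{b}$ in one step. That is a genuine shortening of the computation, though not a different route --- both arguments rest entirely on Lemma~\ref{lem:recurrence} --- and your fallback via the Dyck-path peak statistic is precisely the paper's first (sketched) argument.
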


\subsection{Non-crossing partitions}
An alternative way to establish Theorem~\ref{thm:narayana} is to note that the Narayana numbers are the number of ways to partition $[n]$ into $b$ disjoint nonempty sets which are non-crossing, i.e., so that there are no $a<b<c<d$ so that $a,c\in S_i$ and $b,d\in S_j$ (e.g., see \cite{simion}).  The sets $S_i$, formed by the locations of when the $i$-th ball is thrown, form such a non-crossing partition (i.e., if such $a<b<c<d$ exist then balls $i$ and $j$ intersect at least \emph{three} times, which is impossible).  One then checks that using the same construction as in Theorem~\ref{fact1a} that we can go from a non-crossing partition to one of the juggling card sequences we are counting establishing the bijection.

The important observation to make here, and which we will rely on moving forward, is that if we know the ordering of the balls at the left and right ends \emph{and} we know the order in which the balls are thrown, then we can uniquely determine the cards.

\subsection{Counting using generating functions}
We will now give another proof of Theorem~\ref{thm:narayana} which will employ the use of generating functions.  We focus on looking at the \emph{ball throwing patterns} $P = \langle b_1, b_2, \ldots, b_n\rangle$ which list the balls thrown at each step.  Given that the minimal crossing juggling card sequences will have each of the $b$ balls thrown we have that $P$ is a partition of $[n]$ into $b$ nonempty sets which are ordered by smallest element.

We will find it convenient to consider a shorthand notation $P^* = \langle d_1,d_2, \ldots, d_r\rangle$ for a pattern $P$ where each $d_k$ denotes a block of $d_k$'s of length at least one, and adjacent $d_k$'s are distinct (note that repeated $d_k$'s correspond to use of the card $C_1$).  Thus, if $P=\langle1,1,1,2,2,2,1,3,3,3,3,2,2,4\rangle$ then the reduced pattern is $P^* =\langle1,2,1,3,2,4\rangle$.  As noted before, in the patterns that we are interested in counting, each pair of balls cross exactly twice and so there cannot be an occurrence of $\langle\ldots,a,\ldots,b,\ldots,a,\ldots,b,\ldots\rangle$ in $P^*$.

\begin{proof}[Proof of Theorem~\ref{thm:narayana}]
We now define the following generating functions:
\begin{equation*}
\begin{array}{r@{\,=\,}l}
F_b(y)&\sum_{n \geq 1} f(b,n) y^n, \\[5pt]
F(x,y)&\sum_{b,n \geq 1} f(b,n) x^b y^n  = \sum_{b,n \geq 1} F_b(y) x^n.
\end{array}
\end{equation*}

For $b=1$, we have $f(1,n)=1$ for all $n$, since the only possible juggling card sequence consists of $n$ identical cards $C_1$. Thus,
\begin{equation*}
F_1(y) = y + y^2 + y^3 + \cdots = \frac{y}{1-y}.
\end{equation*}

Let us consider the only possible reduced pattern $P^*=\langle1,2,\overline1\rangle$ of ball throwing patterns for $b = 2$. The notation $\overline1$ indicates that this block of $1$'s may be empty. Thus,
\begin{equation*}
F_2(y) = \frac{y}{1-y} F_1(y) \frac{1}{1-y}=\frac{y^2}{(1-y)^3}
\end{equation*}
where the fraction $\frac{1}{1-y}$ allows for the possibility that the second block of $1$'s may be empty (i.e., this is $1+F_1(y)$).

For $b=3$, there are two possibilities for the reduced pattern $P^*$.  The first is that $P^* =\langle1,C,\overline1\rangle$ where  $C$ consists of $2$'s and $3$'s (and both must occur). The second is that $P^*=\langle1,2,1,3,\overline1\rangle$. Thus, we have
\begin{equation*}
F_3(y) = \frac{y}{1-y} F_2(y) \frac{1}{1-y} + \frac{y}{1-y} F_1(y) \frac{y}{1-y} F_1(y) \frac{1}{1-y} = \frac{y^3(y+1)}{(1-y)^5}.
\end{equation*}

Now consider the case for a general $b \geq 3$. Here, we can also partition the possibilities for $P^*$ into two cases. On one hand, we can have $P^* = \langle1,C\rangle$ where $C$ is a pattern using all $b-1$ of the balls $\{2,3, \ldots, b\}$.  The number of possible reduced patterns in this case is $\frac{y}{1-y} F_{b-1}(y)$.  On the other hand, there may be additional $1$'s which occur after the first block of $1$'s. In this case $P^*$ has the form $\langle1,C_1,C_2\rangle$ where $C_1$ uses $i > 0$ balls (not including $1$), and $C_2$ begins with a $1$ and uses $j > 0$ balls (including $1$).  Note this decomposition is the same that was given in Lemma~\ref{lem:recurrence}.  Since $C_1 \cup C_2 = [b]$ then $i + j = b$. In this case the number of possible patterns is given by the following expression:
\begin{equation*}
\sum_{\substack{0<i<b\\i+j=b}}\frac{y}{1-y} F_i(y) F_j(y)
\end{equation*}
Therefore we have,
\begin{equation*} 
F_b(y) = \frac{y}{1-y} F_{b-1}(y) + \sum_{\substack{0<i < b\\i+j=b}}\frac{y}{1-y}F_i(y)F_j(y)
\end{equation*}
Multiplying both sides by $x^b$ and summing over $b \geq 2$, we obtain
\begin{align*}
 F(x,y) - xF_1(y) &=\sum_{b \geq 2} F_b(y) x^b\\ 
&= \frac{y}{1-y} \sum_{b \geq 2} x^bF_{b-1}(y) + \frac{y}{1-y} \sum_{b\geq 2}\sum_{\substack{0<i<b,\\i+j=b}} x^iF_i(y) \,x^jF_j(y)\\ 
&= \frac{y}{1-y} ( x F(x,y) + \big(F(x,y)\big)^2 )
\end{align*}
In other words,
\begin{equation}
\label{eq3}
y \big(F(x,y)\big)^2 = (1-y-xy) F(x,y) -xy.
\end{equation}
Solving this for $F(x,y)$, we get
\begin{align*}
F(x,y)
&= \frac{1}{2y}\big(1-y-xy-\sqrt{(1-y-xy)^2 - 4xy^2}\big)\\
&= \frac{1}{2y}\big(1-y-xy-\sqrt{(1+y-xy)^2 - 4y}\big)\\
&= \frac{1}{2y}\bigg(1-y-xy-(1+y-xy) \sqrt{1-\frac{4y}{(1+y-xy)^2}}\bigg)\\
&= \frac{1}{2y}\bigg( 1-y-xy\\&\qquad\qquad-(1+y-xy)+(1+y-xy)\sum_{k \geq 1}\frac{(2k-2)!}{2^{2k-1} k! (k-1)!} \frac{(4y)^k}{(1+y-xy)^{2k}}\bigg)\\
&= \frac{1}{2y}\left( -2y+4y\sum_{k \geq 0}\frac{(2k)!}{2^{2k+1} (k+1)! k!} \frac{(4y)^k}{(1+y-xy)^{2k+1}}\right)\\
&= -1 +\sum_{k \geq 0}\frac{1}{k+1}\binom{2k}{k} y^k \sum_{j \geq 0} \binom{2k+j}{j} y^j (x-1)^j.
\end{align*}
Extracting the coefficient of $x^b y^n$, we obtain
\begin{equation*}
f(b,n) = \sum_{k \geq 0} \frac{1}{k+1} \binom{2k}{k} \binom{n+k}{n-k} \binom{n-k}{b} (-1)^{n-b-k}.
\end{equation*}
It remains to check that the right-hand side reduces to $\frac{1}{b} \binom{n}{b-1} \binom{n-1}{b-1}$. Rewriting the right hand side, we obtain 
\begin{equation*}
f(b,n)=\frac{1}{b}\binom{n-1}{b-1}\sum_{k \geq 0} \binom{n+k}{k+1} \binom{n-b}{k} (-1)^{n-b-k}.
\end{equation*}
Thus, our proof will be complete if we can show
\begin{equation*}
\sum_{k \geq 0} (-1)^{n-b-k} \binom{n+k}{k+1} \binom{n-b}{k} = \binom{n}{b-1}.
\end{equation*}
However, this follows at once by identifying the coefficients of $x^b$ in the expressions
\begin{equation*}
\frac{1}{(1-x)^n} (1-x)^{n-b} = (1-x)^{-b}.
\end{equation*}
\end{proof}

Knowing that
\begin{equation*}
F(x,y) = \sum_{b \geq 1} \sum _{n \geq b} \frac{1}{b} \binom{n}{b-1} \binom{n-1}{b-1} x^b y^n,
\end{equation*}
we can substitute into \eqref{eq3} and identify the coefficients of $x^b y^n$ to obtain the following interesting binomial coefficient identity.

\begin{corollary}
We have
\begin{equation*}
\sum_{\substack{1 \leq i \leq b-1\\1 \leq j \leq n-2}}\frac{1}{i (b-i)}\binom{j}{i-1} \binom{j-1}{i-1} \binom{n-1-j}{b-i-1} \binom{n-2-j}{b-i-1} = \frac{2}{b} \binom{n-1}{b-2} \binom{n-2}{b-1}.
\end{equation*}
\end{corollary}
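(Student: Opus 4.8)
The plan is to read this identity directly off the functional equation~\eqref{eq3},
\[
  y\bigl(F(x,y)\bigr)^2 = (1-y-xy)F(x,y) - xy,
\]
using the closed form $F(x,y)=\sum_{b\ge1}\sum_{n\ge b}f(b,n)\,x^by^n$, where $f(b,n)=\frac1b\binom{n}{b-1}\binom{n-1}{b-1}$ is the count established in Theorem~\ref{thm:narayana} (so the series $F$ is supported on $b\ge1$, $n\ge b$). Fix $b\ge2$ and $n\ge3$, and compare the coefficients of $x^by^n$ on the two sides of~\eqref{eq3}.

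On the left, multiplication by $y$ shifts the exponent of $y$, so
\[
  [x^by^n]\,y\bigl(F(x,y)\bigr)^2=[x^by^{n-1}]\bigl(F(x,y)\bigr)^2=\sum f(i,j)\,f(i',j'),
\]
the sum running over $i+i'=b$, $j+j'=n-1$ with all four indices in the support of $F$. Writing $i'=b-i$ and $j'=n-1-j$, the conditions $i,i'\ge1$ and $j\ge i$, $j'\ge i'$ force $1\le i\le b-1$ and $1\le j\le n-2$, and inside that box any term with $j<i$ or $n-1-j<b-i$ already vanishes because one of its binomial factors is zero. Substituting the formula for $f$, the general summand equals
\[
  \frac{1}{i(b-i)}\binom{j}{i-1}\binom{j-1}{i-1}\binom{n-1-j}{b-i-1}\binom{n-2-j}{b-i-1},
\]
which is exactly the summand on the left-hand side of the corollary.

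On the right, $[x^by^n](xy)=0$ since $(b,n)\ne(1,1)$, so $[x^by^n]\bigl((1-y-xy)F(x,y)-xy\bigr)=f(b,n)-f(b,n-1)-f(b-1,n-1)$. Equating the two coefficients of $x^by^n$ in~\eqref{eq3} therefore reduces the corollary to the single binomial identity
\[
  f(b,n)-f(b,n-1)-f(b-1,n-1)=\frac{2}{b}\binom{n-1}{b-2}\binom{n-2}{b-1}.
\]

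It remains to verify this identity, which is the only real computation and hence the main (if mechanical) obstacle. Multiplying through by $b$, substituting $f$, and using the elementary ratios $\binom{n-2}{b-1}=\frac{n-b}{n-1}\binom{n-1}{b-1}$, $\binom{n-2}{b-2}=\frac{n-b+1}{n-1}\binom{n-1}{b-2}$, $\binom{n-2}{b-2}=\frac{b-1}{n-1}\binom{n-1}{b-1}$, together with two applications of Pascal's rule to evaluate $\binom{n}{b-1}-\binom{n-2}{b-1}=\binom{n-1}{b-2}+\binom{n-2}{b-2}$, one finds that each side, after multiplication by $b$, equals $\frac{2(n-b)}{n-1}\binom{n-1}{b-1}\binom{n-1}{b-2}$, which completes the proof. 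The substantive point of the whole argument is the recognition that the left-hand side of the corollary \emph{is} the convolution coefficient $[x^by^{n-1}]\bigl(F(x,y)\bigr)^2$; once that is seen, the functional equation~\eqref{eq3} does everything else.
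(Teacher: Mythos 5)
Your proposal is correct and is exactly the paper's approach: the paper obtains this corollary by substituting the closed form of $F(x,y)$ into the functional equation \eqref{eq3} and identifying coefficients of $x^by^n$, which is precisely what you do (the left side being the convolution coefficient $[x^by^{n-1}](F(x,y))^2$ and the right side being $f(b,n)-f(b,n-1)-f(b-1,n-1)$). Your concluding verification that both sides reduce to $\frac{2(n-b)}{n-1}\binom{n-1}{b-1}\binom{n-1}{b-2}$ checks out.
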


\section{Juggling card sequences with $b(b-1)+2$ crossings}
In the preceding section we looked at minimal crossing juggling card sequences.  In this section we want to look at the ones which are \emph{almost} minimal, in the sense that we will increase the number of crossings to $b(b-1)+2$.  We will focus on the analysis of the ball throwing patterns.

Since each pair of balls cross at least twice and will always cross an even number of times, then it must be the case that there is a special pair of balls, call then $a$ and $b$ with $a<b$, which cross four times.  Therefore the ball throwing pattern contains the pattern $\langle\ldots,a,\ldots,b,\ldots,a,\ldots,b,\ldots\rangle$.  It is possible that there might be additional copies of the $a$'s and $b$'s so that this problem is not equivalent to counting the number of partitions with one crossing, for which if has been shown (see \cite{bergerson, bona}) that the number of partitions of $[n]$ into $b$ sets which have exactly one crossing is $\binom{n}{b-2}\binom{n-5}{b-3}$.  Nevertheless, we will see that the answers are similar and in this section we will establish the following.

\begin{theorem}\label{thm:plustwo}
The number of juggling card sequences $A$ with $b$ balls, using $n$ cards one of which is $C_b$, having $\pi_A=\text{id}$ and $Cr(A)=b(b-1)+2$ is
\[
g(b,n)={n\choose b+2}{n\choose b-2}.
\]
\end{theorem}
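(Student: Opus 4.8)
The plan is to imitate the generating-function proof of Theorem~\ref{thm:narayana}. As there, the first step is to pass from juggling card sequences to ball throwing patterns. Since $\pi_A=\text{id}$, both the initial and the terminal orderings of the balls are $[1,2,\ldots,b]$, so by the observation at the end of the non-crossing partitions subsection the card sequence is uniquely recovered from the pattern $P=\langle b_1,\ldots,b_n\rangle$ of thrown balls, which is an ordered-by-minimum partition of $[n]$ into $b$ nonempty blocks $S_1,\ldots,S_b$; moreover, with $\pi_A=\text{id}$ the hypothesis that $C_b$ is used is equivalent to all $b$ balls being thrown, hence automatic. So $g(b,n)$ is the number of such partitions whose total crossing number equals $b(b-1)+2$.

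The second step is the structural reformulation. For $i<j$, the number of times balls $i$ and $j$ cross depends only on the subword of $P$ on the positions of $S_i\cup S_j$ (this subword begins with an $i$ since $\min S_i<\min S_j$), and equals $2r_{ij}$, where $r_{ij}$ is the number of maximal runs of $j$'s in that subword; I would check this on the card reconstruction exactly as the minimal-crossing discussion already implicitly uses when it forbids $\langle\ldots,a,\ldots,b,\ldots,a,\ldots,b,\ldots\rangle$. Hence $Cr(A)=\sum_{i<j}2(r_{ij}-1)+b(b-1)$, so $Cr(A)=b(b-1)+2$ iff there is a \emph{unique} pair $(a,c)$ with $a<c$, $r_{ac}=2$, and $r_{ij}=1$ for all other pairs; the all-$r_{ij}=1$ case is precisely the non-crossing condition enumerated by $f(b,n)$. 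Thus I am counting ``almost non-crossing'' ordered partitions, in which one distinguished block $S_c$ is split into exactly two runs by one smaller block $S_a$ while every other pair interleaves non-crossingly. (As a sanity check, for $b=2$ this counts words $1^{+}2^{+}1^{+}2^{+}1^{*}$ of length $n$, of which there are $\binom{n}{4}=\binom{n}{b+2}\binom{n}{b-2}$.)

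The third step is to set $G_b(y)=\sum_n g(b,n)y^n$ and $G(x,y)=\sum_{b\ge 2}G_b(y)x^b$ and derive a functional equation for $G$ in terms of the known $F$ from Theorem~\ref{thm:narayana}, by decomposing at the throws of ball $1$ exactly as in Lemma~\ref{lem:recurrence}, while tracking the single extra run. The extra run is either internal to the subsequence $B$ on the first block of balls, or internal to $C$ on the remaining balls (these give terms $\tfrac{y}{1-y}F_iG_j$ and $\tfrac{y}{1-y}G_iF_j$ with $i+j=b$, plus $\tfrac{y}{1-y}G_{b-1}$), or else ball $1$ itself is the smaller member $a$ of the special pair with a partner $c$ that is thrown both between and after the first two throws of ball $1$, so that the clean $B/C$ split is broken; this last family has reduced patterns of the shape $\langle 1,D_1,1,D_2,(1,D_3)\rangle$ with some ball of $D_1$ reappearing in $D_2$, and I would enumerate it as a finite sum of $\tfrac{y}{1-y}$-factors times products $F_iF_j(F_\ell)$. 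Because exactly one extra run is present, $G$ enters linearly, so the outcome is an equation $\big(\text{polynomial in }y,x,F\big)\,G=\big(\text{polynomial in }y,x,F\big)$; differentiating the relation $y\big(F(x,y)\big)^2=(1-y-xy)F(x,y)-xy$ of \eqref{eq3} is a convenient shortcut for pinning down these coefficients, since marking a single defect is a differentiation operation. Solving for $G$, substituting the closed form $F(x,y)=\sum_{b\ge1}\sum_{n\ge b}\tfrac1b\binom{n}{b-1}\binom{n-1}{b-1}x^by^n$, extracting $[x^by^n]$, and collapsing the resulting single binomial sum via the coefficient identity $(1-x)^{-p}(1-x)^{-q}=(1-x)^{-(p+q)}$ used at the end of the proof of Theorem~\ref{thm:narayana}, should produce $g(b,n)=\binom{n}{b+2}\binom{n}{b-2}$.

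The main obstacle is the case analysis underlying the recursion: correctly enumerating the ``new'' family in which the current bottom ball is the smaller member of the special pair, without double-counting it against the ``extra run inside $B$ or inside $C$'' contributions, and absorbing correctly the repeated blocks (extra $C_1$ cards) that the reduced-pattern formalism is built to hide. If this bookkeeping proves too delicate, an alternative route is the bijection suggested by the structural reformulation: deleting the distinguished block $S_c$ lands on a minimal-crossing pattern on $b-1$ balls supported on $T=[n]\setminus S_c$, together with the data of which block is the partner $S_a$ and how the two runs of $S_c$ are inserted; counting the insertions that create exactly one new crossing and summing over $T$ should again give $\binom{n}{b+2}\binom{n}{b-2}$, and one can cross-check the two computations against each other.
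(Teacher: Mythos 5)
Your first two steps are sound and agree with the paper's setup: the reduction to ball throwing patterns, the observation that the $C_b$/all-balls-thrown conditions coincide, and the identification of the crossing condition with the existence of a \emph{unique} special pair crossing four times (the paper's Lemma~\ref{lem:plustwo} begins exactly this way, locating the four crossings of that pair at positions $i_1<i_2<i_3<i_4$). But the heart of the proof is missing. The paper's key move is a \emph{global} decomposition: cutting the pattern at $i_1,i_2,i_3,i_4$ yields four subpatterns $P_0,P_1,P_2,P_3$ (with $P_0$ the two outer pieces glued together, carrying a marked position $i_1$), and the crucial claim -- which requires an argument -- is that no ball other than the special pair appears in two pieces, so each piece is itself a minimal-crossing pattern and $c_0+c_1+c_2+c_3=b+2$. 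This gives the clean identity $G(x,y)=y\,\partial_y\bigl(F(x,y)^4/(4x^2)\bigr)$ with no case analysis at all. Your proposal instead recurses on the first return of ball $1$ as in Lemma~\ref{lem:recurrence}, and you correctly observe that this recursion breaks precisely when the special pair straddles the split; you name that family (``reduced patterns of the shape $\langle 1,D_1,1,D_2,(1,D_3)\rangle$ with some ball of $D_1$ reappearing in $D_2$'') but do not enumerate it, and you yourself flag it as ``the main obstacle.'' That case is genuinely the hard part -- one must also rule out (or count) straddling pairs not involving ball $1$, and control how many extra crossings a straddling ball forces with ball $1$ itself -- so as written the argument has a gap exactly where the work is.

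Two smaller issues. First, your formula $Cr(A)=\sum_{i<j}2r_{ij}$ (crossings determined by runs in the two-ball subword) is only asserted; what is immediate is the inequality $\ge 2r_{ij}$ per pair, and the equality case needs the same kind of reconstruction argument the paper makes, so it should be proved rather than ``checked.'' Second, even granting a functional equation for $G$, the coefficient extraction is not a one-line Vandermonde collapse: the paper ends up with $g(b,n)=\sum_k\binom{n}{k+2,\,k-2,\,b-k,\,n-b-k}$ and needs the separate multinomial identity of Proposition~\ref{prop:binomialidentity} (proved by a double-counting argument) to reach $\binom{n}{b+2}\binom{n}{b-2}$; the identity $(1-x)^{-p}(1-x)^{-q}=(1-x)^{-(p+q)}$ used for Theorem~\ref{thm:narayana} does not suffice here. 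Your alternative ``delete the distinguished block'' bijection is closer in spirit to the paper's four-piece surgery and is probably the more promising route to complete, but it too is only sketched.
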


\subsection{Structural result}
To help establish Theorem~\ref{thm:plustwo} it will be useful to understand the structure of these ball throwing patterns. 

\begin{lemma}\label{lem:plustwo}
A ball throwing pattern, $P$, of length $n$ using $b$ balls with two additional crossings can be decomposed into four ball throwing patterns with no additional crossings, $P_0$, $P_1$, $P_2$, $P_3$ where $P_i$ has length $m_i\ge1$ using $c_i\ge1$ balls, $m_0+m_2+m_2+m_3=n$, $c_0+c_1+c_2+c_3=b+2$, \emph{and} a choice of the location of an entry, $i_1$, in $P_0$.
\end{lemma}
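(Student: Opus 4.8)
Since every two balls cross an even number of times and at least twice (the observation recorded just before the lemma), the hypothesis $Cr(A) = b(b-1)+2$ forces a \emph{unique} pair $a < \beta$ that crosses exactly four times, while every other pair crosses exactly twice. The plan is to run the decomposition of Lemma~\ref{lem:recurrence} while keeping track of this special pair, and to show that the two extra crossings peel off exactly two extra minimal pieces together with one extra bit of location data.

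First I would reduce to the case where the special pair contains the smallest ball of the pattern. As long as the smallest ball $1$ is \emph{not} in $\{a,\beta\}$, the proof of Lemma~\ref{lem:recurrence} applies verbatim (its only hypothesis being that each relevant pair crosses at most twice, which holds for all pairs involving ball $1$): the first card throws ball $1$ to some height $k+1$ and splits $P$ into a piece on the balls $2,\dots,k+1$ and a piece on the balls $1,k+2,\dots,b$, the two ball sets never crossing. The special pair lies entirely in one of these two pieces, so we recurse into it; the other piece, and the removed first card, are ordinary minimal pieces which get amalgamated back into $P_0$ at the end (gluing minimal pieces by the inverse of the $(B)C$ operation of Lemma~\ref{lem:recurrence} costs no extra data). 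So assume now $a=1$, i.e.\ the special pair is $(1,\beta)$.

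In this core case, ball $1$ and ball $\beta$ cross four times; since ball $1$ starts below ball $\beta$, since $\pi_A=\text{id}$, and since a ball is thrown only from the bottom, the four crossings occur, in time order, as a throw of ball $1$, then of ball $\beta$, then of ball $1$, then of ball $\beta$, and between consecutive ones the relative order of $1$ and $\beta$ is frozen. These four distinguished cards cut the card sequence into five consecutive (possibly empty) blocks $E_0,E_1,E_2,E_3,E_4$. I claim --- arguing as in Lemma~\ref{lem:recurrence}, using that no pair crosses more often than its allowance and that once two balls are done crossing their relative order is frozen --- that $E_1$ with the second distinguished card, $E_2$ with the third, and $E_3$ with the fourth are minimal crossing ball throwing patterns $P_1,P_2,P_3$ on their own ball sets, while $E_0$, the first distinguished card, and $E_4$ together (amalgamated with the outer pieces from the recursion) form a minimal crossing ball throwing pattern $P_0$ in which the position of the first distinguished card is a distinguished entry $i_1$. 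Ball $1$ is then used exactly by $P_0$ and $P_2$, ball $\beta$ exactly by $P_1$ and $P_3$, and every other ball by exactly one $P_i$; hence $c_0+c_1+c_2+c_3=(b-2)+2+2=b+2$, and since the five blocks and the four distinguished cards partition the $n$ cards, $m_0+m_1+m_2+m_3=n$. Each piece contains one of the distinguished cards (for $P_0$, the first one), so each $m_i\ge1$ and each $c_i\ge1$. Conversely, from $(P_0,P_1,P_2,P_3)$ and the location $i_1$ one rebuilds $P$ by splicing $P_1,P_2,P_3$ into $P_0$ at $i_1$ and then reading off each card from the right via Proposition~\ref{prop:backone}, raising throw heights to make room for the reinserted balls exactly as in the reconstruction half of Lemma~\ref{lem:recurrence}; this is forced, which gives the bijection needed for Theorem~\ref{thm:plustwo}.

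The substance of the argument, and the main obstacle, is the claim in the previous paragraph: proving that each block together with its terminating distinguished card is a self-contained minimal crossing pattern, that no pair of balls straddling two different blocks ever crosses (so the pieces are genuinely independent rather than an artificial cut of the sequence), that the only balls appearing in two pieces are $1$ and $\beta$ (which is what makes the total exactly $b+2$ rather than larger), and that the single datum $i_1$ records precisely the information destroyed by the decomposition so that the reconstruction is well defined. Once that block structure is nailed down, the inequalities $m_i,c_i\ge1$, the two sum identities, and the uniqueness of the reconstruction all follow by the same mechanism already used in Lemma~\ref{lem:recurrence}.
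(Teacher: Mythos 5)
Your decomposition is at heart the same as the paper's: cut $P$ at the four cards where the unique four-crossing pair $a<\beta$ actually cross, take $P_1,P_2,P_3$ to be the three middle segments and $P_0$ the prefix together with the suffix, and record the position $i_1$ of the first cut inside $P_0$. But there is a genuine gap exactly where you flag ``the main obstacle'': the claim that no ball other than $a$ and $\beta$ occurs in two of the pieces. This is not bookkeeping that follows ``by the same mechanism'' as Lemma~\ref{lem:recurrence}; it is the entire content of the lemma, being precisely what forces $c_0+c_1+c_2+c_3=b+2$ rather than something larger, and what makes the four pieces independent minimal patterns. The needed argument is short but must be given. For instance, if some $c\notin\{a,\beta\}$ were thrown both at some $p_1\in(i_1,i_2)$ and at some $p_2\in(i_2,i_3)$, then $c$ must pass above $\beta$ between $p_1$ and the throw of $\beta$ at $i_2$ (since $c$ is at the bottom at $p_1$ and $\beta$ is at the bottom at $i_2$), and the throw at $i_2$ must send $\beta$ back above $c$ (it is the only throw of $\beta$ before $p_2$, and $c$ must return to the bottom by $p_2$); this uses up both allowed crossings of the pair $\{c,\beta\}$. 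But $\beta$ is thrown again at $i_4$, so $c$ must sit above $\beta$ at that moment, forcing a third crossing --- a contradiction. Without this (or an equivalent) count, the sum $c_0+c_1+c_2+c_3=b+2$ and the minimality of each $P_i$ are unproved.

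Two further issues. First, the preliminary reduction to the case $a=1$ buys nothing --- the four-card cut works verbatim for an arbitrary special pair, since $a$ starts and ends below $\beta$ regardless --- and it imports its own unproved assertion, namely that the special pair cannot straddle the two ball sets $\{2,\dots,k+1\}$ and $\{1,k+2,\dots,b\}$ created by the first card. (This is true, because balls $2,\dots,k+1$ are thrown only before the second throw of ball $1$ and balls $k+2,\dots,b$ only after it, so a straddling pair can cross at most twice; but it is one more thing to check, and ``the two ball sets never crossing'' is not literally correct, as each straddling pair crosses exactly twice.) Second, in the reconstruction direction you must state the identifications that make the splice use $b$ rather than $b+2$ balls: the ball thrown at position $i_1$ of $P_0$ is identified with the last ball thrown in $P_2$ (both become $a$), and the last balls thrown in $P_1$ and $P_3$ are identified (both become $\beta$). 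As written, ``splicing \dots{} and reading off each card from the right'' leaves this, and hence the bijectivity, unspecified.
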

\begin{proof}
The crossings between $a$ and $b$ will happen in four of the cards for the juggling card sequence, and using the ball throwing pattern we can determine precisely where this will happen.  Namely, we know that since $a<b$ then $a$ must at some first point be thrown higher than $b$ which will occur at the last occurrence of $a$ before the first occurrence of $b$ (i.e., the last time we throw $a$ before we see $b$); suppose this happens at $i_1$.  Then the next crossing happens at the last occurrence of $b$ before the first occurrence of $a$ after $i_1$; suppose this happens at $i_2$.  Then the next crossing happens at the last occurrence of $a$ before the first occurrence of $b$ after $i_2$; suppose this happens at $i_3$.  Finally the last crossing happens at the last occurrence of $b$ before the first occurrence of $a$ after $i_3$; suppose this happens at $i_4$.  In particular we have the following (where some of the ``$\ldots$'' might be empty):
\[
\begin{array}{r@{~~~}c@{}c@{}c@{}c@{}c@{}c@{}c@{}c@{}c}
\text{Ball throwing pattern:}&\langle\ldots,&a&,\ldots,&b&,\ldots,&a&,\ldots,&b&,\ldots\rangle\\
\text{Location of crossings:}&&i_1&&i_2&&i_3&&i_4 
\end{array}
\]
Note that there might be additional occurrences of $a$ and $b$ in the ball throwing pattern, so far we have focused only on the location of the crossings.

We now split the ball throwing pattern into four subpatterns $P_i$ as follows:
\begin{itemize}
\item $P_1$ consists of the entries of $P$ between $i_1+1$ and $i_2$ (inclusive).
\item $P_2$ consists of the entries of $P$ between $i_2+1$ and $i_3$ (inclusive).
\item $P_3$ consists of the entries of $P$ between $i_3+1$ and $i_4$ (inclusive).
\item $P_0$ consists of the remaining entries of $P$, namely up to $i_1$ \emph{and} after $i_4+1$.
\end{itemize}

Note that no subpattern contains \emph{both} $a$ and $b$ (by construction), and therefore each one of these subpatterns (by proper relabeling, i.e., so that the first occurrences of the balls in order are $1,2,\ldots$) give ball throwing patterns with no additional crossings.  So we have decomposed the ball throwing pattern into four patterns with no additional crossings, by construction the sum of the lengths of the subpatterns is $n$.  We further have the following which gives information about the number of palls in the subpatterns.

\begin{claim}
No ball other than $a$ and $b$ occurs in two of the $P_i$.
\end{claim}

To see this suppose that a ball $c$ occurred both in $P_1$ and $P_2$.  Then it must be the case that our pattern $P$ contains $\langle\ldots,c\ldots,b,\ldots,c\rangle$.  But this is impossible, because between the two occurrences of $c$ in the pattern $c$ had to go above $b$ (one crossing) and then $b$ had to go above $c$ (a second crossing) and so there are no more available crossings for $b$ and $c$ to interact.  However we know that the ordering on both ends is the identity and so there must be another crossing at some point either before or after the $c$'s to put them in the correct order at both ends giving us a third crossing which is impossible (since other than the pair $a$ and $b$, each pair crosses exactly twice).  The same argument works for each other pair of intervals.

Therefore we can conclude that $a$ appears in $P_0$ and $P_2$, $b$ appears in $P_1$ and $P_3$ and each other ball appears in exactly one of the $P_i$.  Letting $c_i$ denote the number of balls in each $P_i$ we can conclude that $c_0+c_1+c_2+c_3=b+2$.  Finally we note that the decomposition for $P$ involved splitting the interval for $P_0$ at some point, for which there are $m_0$ places we could have chosen (i.e., $i_1$ is something from $1,2,\ldots,m_0$).

To finish the bijection we now show how to take four patterns $P_0,P_1,P_2,P_3$ with no additional crossings with lengths $m_0+m_1+m_2+m_3=n$, number of balls $c_0+c_1+c_2+c_3=b+2$, and a choice $1\le i_1\le m_0$ to form a pattern $P$ with two additional crossings.  We start by first labeling the balls so that they are all distinct among all the $P_i$ and no balls are yet labeled $a$ and $b$ and carry out the following three steps:
\begin{enumerate}
\item Whichever ball is thrown in position $i_1$ in $P_0$ we relabel that ball $a$ in all its occurrences in $P_0$.  Whichever ball is thrown in position $m_1$ in $P_1$ we relabel that ball $b$ in all its occurrences in $P_1$.  Whichever ball is thrown in position $m_2$ in $P_2$ we relabel that ball $a$ in all its occurrences in $P_2$.  Whichever ball is thrown in position $m_3$ in $P_3$ we relabel that ball $b$ in all its occurrences in $P_3$.  (Note that we now have $b$ different labels in use.)
\item Form a ball throwing pattern by concatenating, in order, the first $i_1$ entries from $P_0$, all of $P_1$, all of $P_2$, all of $P_3$, and the remaining $m-i_1$ entries from $P_0$.
\item Relabel the balls so that the first occurrences of the balls in order are $1,2,\ldots$.
\end{enumerate}

This produces a ball throwing pattern which has $b(b-1)+2$ crossings (i.e., since $a$ and $b$ will cross four times and no other pair of balls can have more than two crossings).  Further, applying the preceding decomposition argument we can precisely recover $P_0,P_1,P_2,P_3$ and our choice of $i_1$, establishing the bijection.
\end{proof}

\subsection{Using generating functions}
As in the preceding section, we can define a generating function for what we are trying to count,
\[
G(x,y)=\sum_{b\geq2,n \geq 4} g(b,n) x^b y^n.
\]
We are now ready to establish Theorem~\ref{thm:plustwo}

\begin{proof}[Proof of Theorem~\ref{thm:plustwo}]
From Lemma~\ref{lem:plustwo} we know that the ball throwing patterns we want to count can be decomposed into four ball throwing patterns with no crossings and where there is a choice of where to make a split on the first pattern.  Therefore we have
\begin{equation}\label{eq:g}
g(b,n)=\sum_{\substack{c_i,m_i \ge1\\ c_0+c_1+c_2+c_3=b+2\\m_0+m_1+m_2+m_3=n}}
m_0f(c_0,m_0)f(c_1,m_1)f(c_2,m_2)f(c_3,m_3).
\end{equation}
We recall the generating function for the ball throwing patterns with no crossings (i.e., for minimal crossing juggling sequences),
\[
F(x,y)=\sum_{b,n\ge 1}f(b,n)x^by^n=\frac{1-y-xy-\sqrt{(1-y-xy)^2 - 4xy^2}}{2y},
\]
and note that 
\[
y\frac{\partial}{\partial y}\big(F(x,y)\big) = \sum_{b,n\ge 1}nf(b,n)x^by^n.
\]
If we now multiply both sides of \eqref{eq:g} by $x^by^n$ and then sum we have the following
\begin{align*}
G(x,y)
&=\sum_{b\geq2,n \geq 4} g(b,n) x^b y^n\\
&=\sum_{b\geq2,n \geq 4}\bigg(\sum_{\substack{1\le c_i,m_i\\ c_0+c_1+c_2+c_3=b+2\\m_0+m_1+m_2+m_3=n}}
m_0f(c_0,m_0)f(c_1,m_1)f(c_2,m_2)f(c_3,m_3)\bigg) x^by^n\\
&=\frac1{x^2}\sum_{b\geq2,n \geq 4}\sum_{\substack{1\le c_i,m_i\\ c_0+c_1+c_2+c_3=b+2\\m_0+m_1+m_2+m_3=n}}
\begin{array}{l}
\big(m_0f(c_0,m_0)x^{c_0}y^{m_0}\times f(c_1,m_1)x^{c_1}y^{m_1}\\\qquad\times f(c_2,m_2)x^{c_2}y^{m_2}\times f(c_3,m_3)x^{c_3}y^{m_3}\big)\end{array}\\
&=\frac1{x^2}\bigg(y\frac{\partial}{\partial y}\big(F(x,y)\big)\bigg)\times F(x,y)\times F(x,y)\times F(x,y)\\
&=y\frac{\partial}{\partial y}\bigg( \frac{\big(F(x,y)\big)^4}{4x^2}\bigg).
\end{align*}

Taking the known expression for $F(x,y)$ and letting $z=1-y-xy$ we have
\[
\frac{\big(F(x,y)\big)^4}{4x^2}=
\frac{8z^4-32xy^2z^2+16x^2y^4-(8z^3-16xy^2z)\sqrt{z^2-4xy^2}}{64x^2y^4},
\]
Further we have
\begin{align*}
\sqrt{z^2-4xy^2}
&= z\sqrt{1-\frac{4xy^2}{z^2}}\\
&=z-z\sum_{k\ge1}\frac{(2k-2)!}{2^{2k-1}k!(k-1)!}\frac{(4xy^2)^k}{z^{2k}}\\
&=z-\frac{2xy^2}z-z\sum_{k\ge 2}\frac{(2k-2)!}{2^{2k-1}k!(k-1)!}\frac{(4xy^2)^k}{z^{2k}}\\
&=z-\frac{2xy^2}{z}-2\sum_{k\ge0}\frac{(2k+2)!}{(k+2)!(k+1)!}\frac{x^{k+2}y^{2k+4}}{z^{2k+3}}.
\end{align*}
Substituting this in and simplifying we have
\begin{align*}
\frac{\big(F(x,y)\big)^4}{4x^2}&=\frac14(z^2-2xy^2)\sum_{k\ge 0}\frac{(2k+2)!}{(k+2)!(k+1)!}\frac{x^ky^{2k}}{z^{2k+2}}\\
&=\frac14\sum_{k\ge 0}\frac{(2k+2)!}{(k+2)!(k+1)!}\frac{x^ky^{2k}}{z^{2k}}-\frac12\sum_{k\ge 0}\frac{(2k+2)!}{(k+2)!(k+1)!}\frac{x^{k+1}y^{2k+2}}{z^{2k+2}}\\
&=\frac14+\frac12\sum_{k\ge2}\frac{(2k)!(k-1)}{k!(k+2)!}\frac{x^ky^{2k}}{z^{2k}},
\end{align*}
where in going to the last line we pull off the first term on the first summand and shift the second summand and then combine noting we can drop the $k=1$ case.  We also have
\[
\frac1{z^{2k}}=\frac1{\big(1-y(x+1)\big)^{2k}}=
\sum_{j\ge 0}{2k-1+j\choose j}y^j(x+1)^j.
\]
Substituting this we now have
\begin{align*}
\frac{\big(F(x,y)\big)^4}{4x^2}&=\frac14+\frac12\sum_{\substack{j\ge0\\k\ge2}}\frac{(2k)!(k-1)}{k!(k+2)!}{2k-1+j\choose j}x^k(x+1)^jy^{2k+j}.
\end{align*}
Finally, we can recover $G(x,y)$ since what remains is to take the derivative with respect to $y$ and then multiply by $y$, which is equivalent to bringing down the power of $y$.  After simplifying, we can conclude
\begin{align*}
G(x,y)&=\frac12\sum_{\substack{j\ge0\\k\ge2}}\frac{(2k)!(k-1)(2k+j)}{k!(k+2)!}{2k-1+j\choose j}x^k(x+1)^jy^{2k+j}\\
&=\sum_{\substack{j\ge0\\k\ge2}}{2k+j\choose k+2,k-2,j}x^k(x+1)^jy^{2k+j}\\
&=\sum_{\substack{n\ge 4\\k\ge2}}{n\choose k+2,k-2,n-2k}x^k(x+1)^{n-2k}y^n,
\end{align*}
where ${a\choose b,c,d}$ is the multinomial coefficient ${a!\over b!c!d!}$ and in going to the last line we make the substitution $j\to n-2k$.

We can now get the coefficient of $x^by^n$, which is done by using the binomial theorem and summing over possible $k$.  In particular we can conclude
\begin{align*}
g(b,n)&=\sum_k{n\choose k+2,k-2,n-2k}{n-2k\choose b-k}\\
&=\sum_k{n\choose k+2,k-2,b-k,n-b-k}.
\end{align*}
By the special case $a=2$ of Proposition~\ref{prop:binomialidentity} (given below) this is equal to ${n\choose b+2}{n\choose b-2}$, finishing the proof.
\end{proof}

\begin{proposition}\label{prop:binomialidentity}
$\displaystyle \sum_k{n\choose k+a,k-a,b-k,n-b-k} = {n\choose b+a}{n\choose b-a}$.
\end{proposition}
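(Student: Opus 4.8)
The plan is to prove this by a direct double-counting argument: both sides will be seen to enumerate ordered pairs of subsets of $[n]$. Throughout one uses the convention that a multinomial or binomial coefficient with a negative entry is $0$, so the identity reads $0=0$ outside the range of parameters where the pieces make sense; also, by the symmetry $a\mapsto -a$ (which fixes the right-hand side and merely relabels the summand on the left), one may assume $a\ge 0$.

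First I would read off the right-hand side: $\binom{n}{b+a}\binom{n}{b-a}$ is the number of ordered pairs $(A,B)$ with $A,B\subseteq[n]$, $|A|=b+a$, and $|B|=b-a$. Next I would stratify these pairs according to the size of their intersection. Given such a pair, put $t=|A\cap B|$ and set $k=b-t$. Then $A\setminus B$, $B\setminus A$, $A\cap B$, and $[n]\setminus(A\cup B)$ form an ordered partition of $[n]$ whose four block sizes are $(b+a)-t=k+a$, $(b-a)-t=k-a$, $t=b-k$, and $n-(b+a)-(b-a)+t=n-b-k$; in particular these sum to $n$, as they must. Conversely, an ordered partition of $[n]$ into blocks $(P_1,P_2,P_3,P_4)$ of sizes $(k+a,k-a,b-k,n-b-k)$ recovers a unique such pair via $A=P_1\cup P_3$ and $B=P_2\cup P_3$. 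Hence, for each fixed $k$, the number of pairs $(A,B)$ with $|A\cap B|=b-k$ equals the multinomial coefficient $\binom{n}{k+a,k-a,b-k,n-b-k}$, and summing over $k$ (equivalently over $t$) yields the left-hand side. One checks that the two summation ranges agree: the left sum runs over $a\le k\le\min(b,n-b)$, which corresponds exactly to $\max(0,2b-n)\le t\le b-a$, the admissible values of $|A\cap B|$.

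The argument involves no real obstacle; the only points needing care are routine bookkeeping — confirming that the four block sizes indeed add up to $n$, and that the correspondence between a pair $(A,B)$ with prescribed $|A\cap B|$ and an ordered $4$-tuple of disjoint blocks of the stated sizes is a genuine bijection for each fixed $k$ (so that no pair is double-counted and none is missed). A generating-function alternative — extracting $[x^{b+a}y^{b-a}]$ from $(1+x)^n(1+y)^n$ and matching it against the nested-sum expansion implicit in the left-hand side — is also available, but the set-partition proof is shorter and exposes the combinatorial meaning of the summation index $k$.
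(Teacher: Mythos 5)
Your proof is correct and is essentially the paper's argument: both sides count ordered pairs $(A,B)\subseteq[n]$ with $|A|=b+a$, $|B|=b-a$, stratified by the parameter $k$ (your $|A\cap B|=b-k$ is equivalent to the paper's $|A\cup B|=b+k$). The only cosmetic difference is that the paper factors the multinomial coefficient as $\binom{n}{b+k}\binom{b+k}{2k}\binom{2k}{k+a}$ and describes a sequential choice, whereas you read the multinomial directly as counting the ordered partition $(A\setminus B,\,B\setminus A,\,A\cap B,\,[n]\setminus(A\cup B))$.
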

\begin{proof}
We count the number of ways to select two sets $A$ and $B$ from $n$ elements, with $|A|=b+a$ and $|B|=b-a$.  This is clearly equal to the right hand side, so it remains to show how the left hand side equals this as well.

We begin by noting that we can rewrite the multinomial coefficient as a product of binomial coefficients in the following way,
\[
\sum_k{n\choose k+a,k-a,b-k,n-b-k}=\sum_k{n\choose b+k}{b+k\choose 2k}{2k\choose k+a}.
\]
We now choose our sets in the following way:  First we pick $b+k$ elements which will correspond to $A\cup B$, then among those $b+k$ elements we choose the $2k$ elements which will belong to precisely one of the sets, finally among the $2k$ elements which will belong to exactly one set we choose $k+a$ of them for $A$ and the remaining $k-a$ go to $B$.  Summing over all possibilities for $k$ now gives the desired count.
\end{proof}

\subsection{Higher crossing numbers}
The next natural step in our problem is to ask for the enumeration of sequences $A$ with larger values of $Cr(A)$. One approach to this problem would be to further simplify the types of juggling card sequences we are counting. Let us call a juggling card sequence $A$ \emph{primitive} if it does not use the ``trivial'' card $C_1$, i.e., the card which generates the identity permutation. Such a card does not contribute to the number of crossings $Cr(A)$ of $A$.  We note that counting these primitive juggling card sequences is equivalent to counting the reduced ball throwing patterns which do not end in $1$.

Let us denote by $P_d(n,b)$ the number of primitive juggling card sequences $A$ with $n$ cards using the card $C_b$ with  $\pi(A) = \text{id}$ and $Cr(A) = b(b-1)+d$, and let $Q_d(n,b)$ denote the number of such sequences which are not necessarily primitive. Since crossings occur in pairs, $d$ must be even. Then 
\[
   Q_d(n,b) = \sum_{k=1}^n \binom{n}{k} P_d(k,b).
\]
The hope would be that $P_d(n,b)$ could be simpler in some sense than $Q_d(n,b)$ and would therefore  be easier to recognize. It turns out that if we write $n = b+t$ then it is not hard to show that
\[
    P_0(n,b) = \frac{1}{t+1} \binom{b-2}{t} \binom{b+t}{t}
\]
and
\[
    P_2(n,b) =\binom{b+t}{2t} \binom{2t}{t-2}.
\]
In Table~\ref{tab:P4} we give data (in factored form) for $P_4(n,b)$ for small values of $n$ and $b$.

\begin{table}[t]
\centering
\caption{Data for $P_4(n,b)$}\label{tab:P4}

\bigskip

\begin{tabular} {|c||c|c|c|c|c|c|c|} 
\hline
$P_4(n,b)$ & $b{=}2$ & $b{=}3$ & $b{=}4$ & $b{=}5$ & $b{=}6$ & $b{=}7$ & $b{=}8$\\ \hline \hline
$n{=}6$ & 1 & 3 &  & & & & \\ \hline
$n{=}7$ & & $2 {\cdot} 7$ & $3 {\cdot} 7$ & & & &\\ \hline
$n{=}8$ & & $2^2 {\cdot} 3$ & $2^4 {\cdot} 7$ & $2^2 {\cdot} 3 {\cdot} 7$ & & &\\ \hline
$n{=}9$ & & & $2^2 {\cdot} 3 ^2 {\cdot} 5$ & $2^2 {\cdot} 3 {\cdot} 7^2$ & $2^3 {\cdot} 3^2 {\cdot} 7$ & &\\ \hline
$n{=}10$ & & & $2 {\cdot} 3^2 {\cdot} 5$ & $2^5 {\cdot} 3^2 {\cdot} 5$ & $2 {\cdot} 3 {\cdot} 5 {\cdot} 7 {\cdot} 11$ & $2 {\cdot} 3^2 {\cdot} 5 {\cdot} 7$ & \\ \hline
$n{=}11$ & & & & $3^3 {\cdot} 5 {\cdot} 11$ & $2^4 {\cdot} 3^3 {\cdot} 5 {\cdot} 11$ & $2^5 {\cdot} 3 {\cdot} 7 {\cdot} 11$ & $2 {\cdot} 3^2 {\cdot} 7 {\cdot} 11$\\ \hline
\end{tabular}

\bigskip

\begin{tabular} {|c||c|c|c|c|c|} 
\hline
$P_4(n,b)$ & $b{=}5$ & $b{=}6$ & $b{=}7$ & $b{=}8$ & $b{=}9$ \\ \hline \hline
$n{=}12$ & $2 {\cdot} 5^2 {\cdot} 11$ & $2 {\cdot} 3^2 {\cdot} 5 {\cdot} 11 {\cdot} 13$ & $2^2 {\cdot} 3^2 {\cdot} 5 {\cdot} 11 {\cdot} 17$
& $2^3 {\cdot} 3 {\cdot} 7 {\cdot} 11^2$ &$2^2 {\cdot} 3^2 {\cdot} 7 {\cdot} 11$  \\ \hline
$n{=}13$ & & $2 {\cdot} 5 {\cdot} 7 {\cdot} 11 {\cdot} 13$ & $2^2 {\cdot} 3^3 {\cdot} 5 {\cdot} 11 {\cdot} 13$ 
& $2^2 {\cdot} 3^2 {\cdot} 11 {\cdot} 13 {\cdot} 23$ &  $2^2 {\cdot} 3 {\cdot} 11 {\cdot} 13 {\cdot} 29$  \\ \hline
$n{=}14$ & & $3 {\cdot} 7 {\cdot} 11 {\cdot} 13$ & $5 {\cdot} 7 {\cdot} 11 {\cdot} 13 {\cdot} 19$& $2^3 {\cdot} 3^2 {\cdot} 5 {\cdot} 7 {\cdot} 11 {\cdot} 13$ &
$2^3 {\cdot} 3^2 {\cdot} 5 {\cdot} 7 {\cdot} 11 {\cdot} 13$ \\ \hline
\end{tabular}
\end{table}

The fact that there are many small factors suggest that $P_4(n,b)$ could be made up of binomial coefficients in some way. However, the presence of occasional ``large'' factors makes it difficult to guess what the expressions might actually be (for example, $P_4(14,10) = 3 {\cdot} 7 {\cdot} 11 {\cdot} 13 {\cdot} 37$). Nevertheless, computations suggested that $P_4(n,b)$ is given by the following expression:
\[
   P_4(n,b)=\frac{(bn-b-8)}{2(b+4)}\binom{n}{b+3}\binom{n}{b-2}.
\]
This has been confirmed by one of the authors (Cummings), but we do not give a proof of this result here.  We don't even have a guess as to what the expressions are for $P_{2k}$ when $k \geq 3$!

\section{Final arrangements consisting of a single cycle}
Suppose that we draw cards at random from the set $\{C_1, C_2, \ldots, C_b\}$ with replacement to form a juggling card sequence $A$. We can then ask for the probability that $\pi_A$ has some particular property.  For example, what is the probability that it is equal to some given permutation, such as the identity, or that the permutation consists of a single cycle. The first question can be answered using Theorem~\ref{fact1a}. The answer for the second question is especially nice.  We state the result as follows.

\begin{theorem}\label{fact4}
The probability that a random sequence $A$ of $n$ cards taken from the set of juggling sequence cards $\{C_1, C_2, \ldots, C_b\}$ has $\pi_A$ consisting of a single cycle is $1/b$.  In particular, this is independent of $n$.
\end{theorem}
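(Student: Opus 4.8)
The plan is to count the juggling card sequences $A$ of length $n$ with $\pi_A$ a single $b$-cycle and to show this count is $b^{n-1}$; since there are $b^{n}$ sequences in all, the probability is then $1/b$, manifestly independent of $n$.

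First I would feed Theorem~\ref{fact1a} into the sum over $b$-cycles. Using $JS(\sigma,n,\mathcal C)=\sum_{k\ge b-L(\sigma)}{n\brace k}$ and interchanging the two summations, the number of sequences with $\pi_A$ a $b$-cycle is
\[
\sum_{k=0}^{b}{n\brace k}\,M_k,\qquad
M_k:=\#\bigl\{\sigma\in S_b:\ \sigma\text{ is a }b\text{-cycle and }\sigma(k{+}1)<\sigma(k{+}2)<\cdots<\sigma(b)\bigr\},
\]
because $L(\sigma)\ge b-k$ says exactly that the last $b-k$ entries of $\sigma$ are increasing. So the problem reduces to evaluating $M_k$.

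I claim $M_k=\dfrac{(b-1)!}{(b-k)!}$ for $1\le k\le b$; equivalently, among the $b^{\underline k}=b!/(b-k)!$ permutations of $[b]$ whose last $b-k$ values occur in increasing order, exactly a $1/b$ fraction are $b$-cycles. Granting this, the connection-coefficient identity $x^{n}=\sum_{k}{n\brace k}\,x^{\underline k}$ evaluated at $x=b$ gives
\[
\sum_{k=0}^{b}{n\brace k}\,M_k=\frac1b\sum_{k=0}^{b}{n\brace k}\,b^{\underline k}=\frac1b\,b^{n}=b^{n-1},
\]
which is what is needed (the $k=0$ term is harmless since ${n\brace 0}=0$ for $n\ge1$).

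The claim about $M_k$ is the main obstacle. The cases $k=b$ and $k=b-1$ are immediate, both giving $(b-1)!$. For general $k$ I would argue by induction on $b$. Observe first that in such a $\sigma$ the value $b$ must lie in one of the first $k$ positions: $\sigma(b)\ne b$ since $\sigma$ has no fixed point, and if $\sigma(j)=b$ for some $j>k$ then, $b$ being the largest value while $\sigma(k{+}1)<\cdots<\sigma(b)$, we would be forced to $j=b$, contradiction. ``Splicing'' the value $b$ out of the cycle of $\sigma$ then yields a $(b-1)$-cycle $\sigma^{\flat}$ on $[b-1]$ whose last $b-1-k$ values are still increasing, and one checks that the fibre of $\sigma\mapsto\sigma^{\flat}$ over a given $\tau$ has size $b-1-\tau(b-1)$, namely the number of admissible places at which to reinsert $b$. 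This gives a recursion for $M_k$ in terms of the $(b-1)$-level data, and the remaining bookkeeping — reconciling it with $(b-1)!/(b-k)!$, which comes down to computing the average of the last entry over the relevant $(b-1)$-cycles, again via the same contraction — is the technical heart. (As a sanity check on the mechanism in the simplest nontrivial case: conjugation by the transposition $(b{-}1\ b)$ is a fixed-point-free involution on the set of $b$-cycles that reverses the relative order of $\sigma(b-1)$ and $\sigma(b)$, so exactly half of all $b$-cycles satisfy $\sigma(b-1)<\sigma(b)$ — this is the $k=b-2$ instance.)
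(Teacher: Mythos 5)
Your reduction is sound and is, in substance, the paper's own: interchanging the two sums turns the count into $\sum_k \big\{\!{n\atop k}\!\big\} M_k$, and since $\sum_k \big\{\!{n\atop k}\!\big\}\, b^{\underline{k}} = b^n$, everything hinges on the single combinatorial claim $M_k = \tfrac1b\, b^{\underline{k}}$, i.e.\ that the number of $b$-cycles $\sigma$ with $L(\sigma)\ge j$ is $(b-1)!/j!$ (your claim with $j=b-k$). That is exactly Lemma~\ref{obs2}, and the paper's bookkeeping with the classes $X_k$ and $Y_k$ is just another phrasing of your connection-coefficient computation. So the question is whether you have proved the claim about $M_k$.

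You have not, and this is a genuine gap rather than routine bookkeeping. Your splicing map is well defined and the fibre count $b-1-\tau(b-1)$ is correct for $k\le b-2$ (for $k=b-1$ the fibre has size $b-1$), but the recursion it yields, $M_k^{(b)}=\sum_\tau\bigl(b-1-\tau(b-1)\bigr)$, requires knowing the \emph{average} of $\tau(b-1)$ over the $(b-1)$-cycles $\tau$ with $\tau(k+1)<\cdots<\tau(b-1)$; to be consistent with the target formula this average would have to equal $(b-1)\frac{b-k-1}{b-k}$, which is a strictly finer statistic than the cardinality supplied by your induction hypothesis. As stated the induction does not close, and you explicitly leave this ``technical heart'' undone -- but it is the entire content of the theorem. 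The claim admits a short direct proof that avoids the recursion (this is how the paper argues): to build a $b$-cycle with $\sigma(k+1)<\cdots<\sigma(b)$, choose the $b-k$ values $a_0>a_1>\cdots>a_{b-k-1}$ destined for positions $b,b-1,\ldots,k+1$ from $[b-1]$ rather than $[b]$ (choosing $b$ would force $\sigma(b)=b$, a fixed point); since $a_i\le b-1-i$ the partial map $\sigma(b-i)=a_i$ has no fixed points and creates no cycles, so its functional graph is a disjoint union of exactly $k$ paths, and $k$ paths can be linked into a single $b$-cycle in exactly $(k-1)!$ ways. Hence $M_k=\binom{b-1}{b-k}(k-1)!=(b-1)!/(b-k)!$, which closes your argument.
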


The following proof is due to Richard Stong \cite{stong}.  We start with the following two basic lemmas.

\begin{lemma}\label{obs1}
The probability that a random permutation $\sigma$ of $[b]$ has $L(\sigma) \geq k$ is $1/k!$ for $1 \leq k \leq b$.
\end{lemma}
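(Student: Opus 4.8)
The plan is to unwind the definition of $L(\sigma)$ and then apply a one-line symmetry argument. By definition, $L(\sigma)\ge k$ holds exactly when $\sigma(b-k+1)<\sigma(b-k+2)<\cdots<\sigma(b)$; that is, the event $\{L(\sigma)\ge k\}$ is precisely the event that the $k$ values sitting in the last $k$ positions of $\sigma$ are arranged in increasing order. The only subtlety is that $L(\sigma)$ is defined as the \emph{largest} such $\ell$, but this changes nothing: $L(\sigma)\ge k$ still means nothing more and nothing less than ``the final $k$ entries are increasing.''

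First I would fix the unordered set $T=\{\sigma(b-k+1),\ldots,\sigma(b)\}$ of values occupying the last $k$ positions. For a uniformly random permutation $\sigma$ of $[b]$, conditioned on $T$ being this set of tail values, the $k!$ arrangements of the elements of $T$ among positions $b-k+1,\ldots,b$ are equally likely (and, independently, the remaining $b-k$ values fill the first $b-k$ positions in one of $(b-k)!$ equally likely ways). Exactly one of the $k!$ arrangements of $T$ is the increasing one, so the conditional probability that $\sigma(b-k+1)<\cdots<\sigma(b)$ equals $1/k!$. Since this value does not depend on $T$, the unconditional probability is $1/k!$ as well. Equivalently, one can count directly: the permutations of $[b]$ whose last $k$ entries are increasing are obtained by choosing the $k$-element tail set (after which its order is forced) and permuting the other $b-k$ values arbitrarily, giving $\binom{b}{k}(b-k)!$ such permutations, and $\binom{b}{k}(b-k)!/b!=1/k!$.

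I do not expect any real obstacle here; the statement is essentially immediate once the event is recognized as ``the last $k$ positions are sorted.'' As sanity checks, $k=1$ gives probability $1$ (a length-one tail is always increasing, consistent with $L(\sigma)\ge 1$ always holding), and $k=b$ gives $1/b!$, matching the fact that $L(\sigma)\ge b$ forces $\sigma=\mathrm{id}$.
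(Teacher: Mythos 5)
Your proof is correct and is essentially the paper's argument: the paper also counts the permutations with $L(\sigma)\ge k$ by choosing the $k$-element set of tail values (whose order in the last $k$ positions is then forced to be increasing) and completing the first $b-k$ positions arbitrarily, giving $\binom{b}{k}(b-k)!=b!/k!$ such permutations. Your conditioning phrasing is just a repackaging of that same count, so there is nothing substantively different here.
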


\begin{proof}
Select a $k$-element subset $\{a_0 > a_1>\cdots> a_{k-1}\}$ from $[b]$. Define the permutation $\rho$ by first setting $\rho(b-i) = a_i$ for $0 \leq i \leq k-1$. There are exactly $(b-k)!$ ways to complete $\rho$ so that it is a permutation of $[b]$. Clearly, $L(\sigma) \geq k$ and there are $\binom{b}{k} (b-k) = b!/k!$ choices for $\rho$ (and any $\rho$ with $L(\rho) = k$ must be formed this way). Thus, the probability that a random $\rho$ has $L(\rho) \geq k$ is $1/k!$ as claimed.
\end{proof}

We note here that the number of permutations of $[b]$ that consist of a single cycle is $(b-1)!$.

\begin{lemma}\label{obs2}
The probability that a random permutation $\sigma$ of $[b]$ which consists of a single cycle has $L(\sigma) \geq k$ is $1/k!$ for $1 \leq k \leq b-1$.
\end{lemma}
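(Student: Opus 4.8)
The plan is to prove the equivalent counting statement that the number $M_k^{(b)}$ of $b$-cycles $\sigma$ of $[b]$ with $L(\sigma)\ge k$ equals $(b-1)!/k!$; since there are exactly $(b-1)!$ $b$-cycles, dividing gives the asserted probability $1/k!$. I would argue by induction on $b$. The base cases $b\le 2$ are trivial (for $b=1$ the range of $k$ is empty, and $M_1^{(2)}=1$), so suppose $b\ge 3$ and that the formula holds for $b-1$. The case $k=1$ is immediate because $L(\sigma)\ge 1$ always, so fix $2\le k\le b-1$.

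The engine of the induction is to condition on $p:=\sigma^{-1}(1)$, the position carrying the smallest ball. First I would note that $p\notin\{b-k+2,\dots,b\}$: on those positions $\sigma$ is strictly increasing and $1$ is the global minimum, so if $1$ appeared among them it would have to appear at the first one, namely position $b-k+1$; and $p\ne 1$, since $\sigma(1)=1$ would make $\sigma$ fix a point. Hence $p$ lies in $\{2,\dots,b-k\}\cup\{b-k+1\}$, and I split on these two possibilities. In either case, deleting the ball $1$ from the cycle of $\sigma$ (replace the arc $p\to 1\to\sigma(1)$ by $p\to\sigma(1)$) and relabelling $2,\dots,b$ as $1,\dots,b-1$ produces a $(b-1)$-cycle $\widehat\sigma$ of $[b-1]$; conversely $\sigma$ is recovered from $\widehat\sigma$ together with the value of $p$ by re-inserting $1$ at the corresponding arc.

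The content is in identifying which $\widehat\sigma$ occur. When $p=b-k+1$, the entry $\sigma(p)=1$ already sits at the bottom of the increasing block, so $L(\sigma)\ge k$ is exactly the condition $\sigma(b-k+2)<\cdots<\sigma(b)$, which after deletion and relabelling becomes precisely $L(\widehat\sigma)\ge k-1$; this case is thus in bijection with $(b-1)$-cycles having $L\ge k-1$. When $p\in\{2,\dots,b-k\}$, deletion removes a position strictly to the left of the increasing block, so that block persists (merely shifted and relabelled) and the condition becomes $L(\widehat\sigma)\ge k$; for each of the $b-k-1$ admissible values of $p$ this case is in bijection with $(b-1)$-cycles having $L\ge k$. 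Therefore
\[
    M_k^{(b)}=M_{k-1}^{(b-1)}+(b-k-1)\,M_k^{(b-1)}\qquad(2\le k\le b-1),
\]
where at $k=b-1$ the second summand vanishes because its coefficient is $0$, so one never needs $M_{b-1}^{(b-1)}$. Substituting the inductive values $M_{k-1}^{(b-1)}=(b-2)!/(k-1)!$ and (when $k\le b-2$) $M_k^{(b-1)}=(b-2)!/k!$ gives $M_k^{(b)}=(b-2)!\bigl(k+(b-k-1)\bigr)/k!=(b-1)!/k!$, and the boundary case $k=b-1$ gives $M_{b-1}^{(b)}=M_{b-2}^{(b-1)}=1$, completing the induction.

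The step I expect to demand the most care is the bijection bookkeeping: verifying that $p$ really ranges over exactly $\{b-k+1\}$ in the first case and $\{2,\dots,b-k\}$ in the second (that the exclusions of $p=1$ and of $p\in\{b-k+2,\dots,b\}$ are tight), and that re-inserting $1$ into $\widehat\sigma$ reproduces the increasing-tail condition of the correct length. Everything else is routine arithmetic. I would deliberately avoid conditioning instead on $\sigma^{-1}(b)$: that also works, but it produces a weighted sum $\sum_{\widehat\sigma}\bigl((b-1)-\widehat\sigma(b-1)\bigr)$ rather than a clean two-term recurrence, so conditioning on the position of the smallest ball is the better choice.
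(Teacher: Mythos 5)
Your proof is correct, but it takes a genuinely different route from the paper's. The paper argues directly and non-recursively: to build a $b$-cycle $\rho$ with $L(\rho)\ge k$ it chooses the increasing tail of values $\rho(b-k+1)<\cdots<\rho(b)$ as a $k$-subset of $[b-1]$ (the value $b$ is excluded only because $\rho(b)=b$ would create a fixed point), and then invokes the standard fact that the resulting $b-k$ disjoint chain-fragments can be linked into a single cycle in exactly $(b-k-1)!$ ways, giving $\binom{b-1}{k}(b-k-1)!=(b-1)!/k!$ such cycles at once. You instead delete the element $1$ from the cycle, condition on its preimage $p=\sigma^{-1}(1)$, and derive the recurrence $M_k^{(b)}=M_{k-1}^{(b-1)}+(b-k-1)M_k^{(b-1)}$; your case analysis on $p$ (only $p=b-k+1$ or $2\le p\le b-k$ are possible, and the tail condition transforms to $L\ge k-1$ or $L\ge k$ respectively) is sound, and the arithmetic closes the induction, with the degenerate term $M_{b-1}^{(b-1)}=0$ harmlessly suppressed by its vanishing coefficient. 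The trade-off: the paper's count is shorter and exhibits the $1/k!$ transparently as ``choose and sort $k$ values,'' but leans on the chain-linking count that it only asserts (``it is easy to see''); your argument is fully self-contained and elementary, at the cost of a recurrence and more bookkeeping in the deletion/insertion bijection. Both are valid proofs of the lemma.
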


\begin{proof}
The proof is similar to that of Lemma~\ref{obs1}. In this case we choose $k$ elements $\{ a_0 > a_1> \cdots > a_{k-1}\}$ from $[b-1]$ and map $\rho(b-i)$ to $a_i$ for $0 \leq i \leq k-1$ as before. The reason that we don't allow $a_0 = b$ is that if $\rho(b) = a_0 = b$ then $\rho$ would have a fixed point and so, could not be a single cycle.  Now the question is how to complete the definition of $\rho$ so that it becomes a single cycle. This is actually quite easy. We have the beginning of $b-k$ chains, namely, $b \rightarrow a_0$, $b-1 \rightarrow a_1$, \ldots, $b-k+1 \rightarrow a_{k-1}$, together with the remaining single points not included in the points listed so far. It is just a matter of piecing these fragments together to form a single cycle. The fact that some of the $a_i$ might be equal to some of the $b-j$ causes no problem. It is easy to see that there are just $(b-k-1)!$ ways to complete the definition of $\rho$ so that it becomes a single cycle with $L(\rho) \geq k$, and furthermore all such $\rho$ can be constructed this way.  Since $\binom{b-1}{k} (b-k-1)! = (b-1)!/k!$, and there are $(b-1)!$  permutations of $[b]$ that are cycles of length $b$, this completes the proof of Lemma~\ref{obs2}.
\end{proof}

\begin{proof}[{Proof of Theorem~\ref{fact4}}]
Partition the set of $b!$ permutations of $[b]$ into $b$ disjoint classes $X_k$, for $1 \leq k \leq b$.  Namely, $\sigma \in X_k$ if and only if $L(\sigma) = k$. By Lemma~\ref{obs1}, $|X_k| = b! \big( \frac{1}{k!} - \frac{1}{(k+1)!} \big)$ for $1 \leq k \leq b-1$, while $|X_b| = 1$. Similarly, we can partition the set of $(b-1)!$ permutations which are $b$-cycles into disjoint sets $Y_k$, for $1 \leq k \leq b-1$, where $\sigma \in Y_k$ if and only if $L(\sigma) = k$.  By Lemma~\ref{obs2}, $|Y_k| = (b-1)! \big( \frac{1}{k!} - \frac{1}{(k+1)!} \big)$ for $1 \leq k \leq b-2$, while $|Y_{b-1}| = 1$. Note that $L(\sigma) \geq b-1$ if and only if $\sigma \in X_{b-1} \cup X_b$.

Now by Theorem~\ref{fact1a}, each $\sigma \in X_k$ accounts for exactly $\sum_{k=b-L(\sigma)}^b \big\{\!{n\atop k}\!\big\}$ different card sequences $A$ with $\pi_A = \sigma$, and the same is true for each $\sigma \in Y_k$, where $1 \leq k \leq b-2$. Furthermore, $|X_k| = b|Y_k|$ for these $k$. In addition, each $\sigma \in X_{b-1} \cup X_b$ and each $\sigma \in Y_{b-1}$ accounts for exactly $\sum_{k=1}^b \big\{\!{n\atop k}\!\big\}$ different card sequences $A$ with $\pi_A = \sigma$. Thus, since  $|X_{b-1} \cup X_b| = \frac{b!}{(b-1)!} = b = b|Y_{b-1}|$ then it follows that  the number of card sequences accounted for by all $\sigma$ (which is $b^n$) is exactly $b$ times the number accounted for by the $\sigma$ which are $b$-cycles. In other words, the probability that a random sequence of $n$ cards generates a permutation which is a $b$-cycle is just $1/b$, independent of $n$.
\end{proof}

It turns out that the analog of Theorem~\ref{fact4} holds for cards where $m$ balls are thrown.

\begin{theorem}\label{fact5}
The probability that a random sequence $A$ of length $n$ using cards where $m$ balls are thrown at a time has $\pi_A$ equal to a $b$-cycle is $1/b$. In particular, this is independent of $n$.
\end{theorem}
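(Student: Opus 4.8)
The plan is to run the argument of the proof of Theorem~\ref{fact4} essentially unchanged, observing that it rests on only two ingredients: (i) the count $JS(\sigma,n,\mathcal C)$ depends on $\sigma$ only through the statistic $L(\sigma)$; and (ii) the two identities for the distribution of $L$ supplied by Lemmas~\ref{obs1} and~\ref{obs2}. Ingredient (ii) is a statement purely about permutations of $[b]$ and makes no reference to $m$, so it is available verbatim. For ingredient (i), the $m$-ball generalization of Theorem~\ref{fact1a} established in Section~\ref{sec3} gives, for $\mathcal C$ the collection of all cards throwing $m$ balls,
\[
JS(\sigma,n,\mathcal C)=\sum_{k=b-L(\sigma)}^{b}\bigg\{\!{n\atop k}\!\bigg\}_{\!m},
\]
which again depends on $\sigma$ only through $L(\sigma)$; so write $JS(\sigma,n,\mathcal C)=h(L(\sigma))$ with $h(\ell)=\sum_{k=b-\ell}^{b}\big\{\!{n\atop k}\!\big\}_{\!m}$. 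The one small point to record is that, since $n\ge 1$, the empty set admits no ordered $m$-subset, so $\big\{\!{n\atop 0}\!\big\}_{\!m}=0$, whence $h(b-1)=h(b)$; this is the exact analogue of the relation $\big\{\!{n\atop 0}\!\big\}=0$ used tacitly in the $m=1$ proof, and it is what lets the top two $L$-classes be merged.

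Next I would partition the $b!$ permutations of $[b]$ into classes $X_k=\{\sigma:L(\sigma)=k\}$ for $1\le k\le b$, and the $(b-1)!$ permutations that are $b$-cycles into classes $Y_k=\{\sigma \text{ a $b$-cycle}:L(\sigma)=k\}$ for $1\le k\le b-1$ (a $b$-cycle has a fixed point if $L(\sigma)=b$, so $L(\sigma)\le b-1$ there). Lemma~\ref{obs1} gives $|X_k|=b!\bigl(\tfrac1{k!}-\tfrac1{(k+1)!}\bigr)$ for $1\le k\le b-1$ and $|X_b|=1$, so $|X_{b-1}\cup X_b|=(b-1)+1=b$; Lemma~\ref{obs2} gives $|Y_k|=(b-1)!\bigl(\tfrac1{k!}-\tfrac1{(k+1)!}\bigr)$ for $1\le k\le b-2$ and $|Y_{b-1}|=1$. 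Hence $|X_k|=b\,|Y_k|$ for $1\le k\le b-2$, and $|X_{b-1}\cup X_b|=b=b\,|Y_{b-1}|$.

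Finally, since $JS$ is constant on each $L$-class and $h(b-1)=h(b)$, merging $X_{b-1}$ with $X_b$ yields
\[
\sum_{\sigma}JS(\sigma,n,\mathcal C)=\sum_{k=1}^{b-2}|X_k|\,h(k)+|X_{b-1}\cup X_b|\,h(b-1),\qquad
\sum_{\sigma\text{ a $b$-cycle}}JS(\sigma,n,\mathcal C)=\sum_{k=1}^{b-2}|Y_k|\,h(k)+|Y_{b-1}|\,h(b-1),
\]
and the size relations above show the left-hand sum equals exactly $b$ times the right-hand one. Since every length-$n$ card sequence drawn from $\mathcal C$ is counted exactly once by $\sum_\sigma JS(\sigma,n,\mathcal C)$, the probability that $\pi_A$ is a $b$-cycle is the ratio of these two sums, namely $1/b$, with no residual dependence on $n$. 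The only substantive input is the $m$-ball version of Theorem~\ref{fact1a}, which is already in hand, so there is no real obstacle here beyond checking that its formula — like the $m=1$ one — sees $\sigma$ only through $L(\sigma)$ and that its values at $L=b-1$ and $L=b$ coincide, both of which are immediate from the displayed expression and from $\big\{\!{n\atop 0}\!\big\}_{\!m}=0$.
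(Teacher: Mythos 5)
Your proposal is correct and is exactly the argument the paper intends: it states that the proof "follows the same lines as the proof of Theorem~\ref{fact4}," with the key point being that each $\sigma$ accounts for $\sum_{k=b-L(\sigma)}^b \big\{\!{n\atop k}\!\big\}_{\!m}$ card sequences, which is precisely your ingredient (i). Your explicit remark that $\big\{\!{n\atop 0}\!\big\}_{\!m}=0$ justifies merging the classes $X_{b-1}$ and $X_b$, a detail the paper leaves tacit; otherwise the two arguments coincide.
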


The proof follows the same lines as the proof of Theorem~\ref{fact4} and will be omitted.  The basic point is that in this case each $\sigma$ with $L(\sigma) = k$ accounts for exactly $\sum_{k=b-L(\sigma)}^b \big\{\!{n\atop k}\!\big\}_{\!m}$ sequences of $m$-cards with $\pi_A = \sigma$.  Note that it is not obvious that Theorem~\ref{fact5} even holds for $n=1$.

The surprising thing is that these results apply for all $n$ and is not tied to a limiting process.  Indeed, in the limit this is a special case of a much more general group theoretic principle that we prove now.

\begin{theorem}\label{fact6}
Let $G$ be a group, let $\mathcal{S} = \{g_1,\dots, g_k\}$ be a generating set of $G$, and let $\mathcal{P} = \{p_1,\dots,p_k\}$ be a corresponding set of non-zero probabilities summing to $1$.  Consider the Markov chain on $G$ where at each stage the current element is multiplied by a random $g \in \mathcal{S}$ chosen with probability given by $\mathcal{P}$.  Then the stationary distribution of this process is the uniform distribution, independent of the group structure or $\mathcal{P}$.
\end{theorem}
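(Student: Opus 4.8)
The plan is to observe that the one-step transition operator of this walk is doubly stochastic and that the walk is irreducible; these two facts together pin down the unique stationary distribution as the uniform one. I will assume $G$ is finite, since that is the only situation in which a uniform probability distribution exists (and it is the only case needed above, where $G$ is a symmetric group); one can take the multiplication to be on the right, the argument for left multiplication being identical.

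First I would write down the transition matrix $T$ on the state space $G$, with $T(x,y)=\sum_{i\,:\,xg_i=y}p_i$. Each row sums to $\sum_{i=1}^k p_i=1$, so $T$ is stochastic. For the columns, fix $y\in G$: for each $i$ the equation $xg_i=y$ has the unique solution $x=yg_i^{-1}$, so the $y$-th column of $T$ also sums to $\sum_i p_i=1$. Hence $T$ is doubly stochastic, and the uniform vector $u$, $u(x)=1/|G|$, satisfies
\[
(uT)(y)=\frac1{|G|}\sum_{x\in G}T(x,y)=\frac1{|G|},
\]
so $u$ is a stationary distribution. Note this used only $\sum_i p_i=1$, which already accounts for the independence of $\mathcal{P}$.

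Next I would establish uniqueness. Since $G$ is finite and $\mathcal{S}$ generates $G$ as a group, it in fact generates $G$ as a monoid: in a finite group $g^{-1}=g^{\operatorname{ord}(g)-1}$, so every element is a product of positive powers of the $g_i$. Therefore, from any state $x$ and any target $z$, there is a word $w$ in the $g_i$ with $xw=z$, and the corresponding path has positive probability because every $p_i>0$. Thus the chain is irreducible on the finite set $G$, and an irreducible finite Markov chain has a unique stationary distribution, which must therefore be $u$.

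The main point to get right is simply the column-sum computation — i.e., noticing that right multiplication by each generator permutes $G$ — so I do not anticipate a real obstacle; the substantive use of the hypotheses is that $\mathcal{S}$ generates (for irreducibility) and that the $p_i$ are positive (also for irreducibility), while $G$ finite is what lets ``generates as a group'' be upgraded to ``generates as a monoid.'' I would also add a one-line remark that the theorem claims only that the \emph{stationary} distribution is uniform; convergence of the $n$-step distributions to uniform would additionally require aperiodicity, which can genuinely fail, so it is (correctly) not asserted.
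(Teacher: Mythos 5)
Your proof is correct, but it takes a genuinely different route from the paper's. You verify directly that the uniform distribution is stationary (the transition matrix is doubly stochastic because right multiplication by each $g_i$ permutes $G$) and then obtain uniqueness from irreducibility of a finite chain, using the observation that in a finite group a generating set for the group is automatically a generating set for the monoid. The paper instead runs a maximum-principle argument in the group algebra: writing $\mathcal{D}=\sum_i p_i g_i$ and letting $\mathcal{F}=\sum_{g} q_g g$ be a stationary distribution, it picks $h$ with $q_h$ maximal, equates coefficients of $h$ in $\mathcal{D}\mathcal{F}=\mathcal{F}$ to get $q_h=\sum_i p_i q_{g_i^{-1}h}\leq q_h$, concludes $q_{g_i^{-1}h}=q_h$ for every $i$, and propagates this until all $q_g$ are equal. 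So the paper shows in one stroke that \emph{every} fixed point of the transition operator is constant (existence of a stationary distribution being automatic for a finite chain), whereas you split the work into ``uniform is stationary'' plus ``the stationary distribution is unique.'' Your version makes explicit two points the paper leaves tacit: that finiteness is what upgrades group generation to monoid generation (a step needed for the propagation in the paper's argument just as much as for your irreducibility claim), and that the theorem asserts only uniformity of the stationary distribution, so aperiodicity is not needed. Both arguments use the hypotheses in the same essential places, and neither has a gap.
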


\begin{proof}
For simplicity we will assume that our walk begins at the identity element.  Consider the formal sum $\mathcal{D} = \sum_{g_i\in S} p_i g_i$.  The probability distribution of the random walk after $n$ steps is then given by the formal sum $\mathcal{D}^n$.  Let $\mathcal{F} = \sum_{g\in G}q_gg$ be the stationary distribution of this Markov chain.  Then we have that $\mathcal{F}$ acts as a fixed point, i.e., $\mathcal{D}\mathcal{F} = \mathcal{F}$.

Let $h$ be a group element whose probability $q_h$ in the stationary distribution is maximum, i.e., $q_h \geq q_g$ for all $g \in G$.  Applying this after equating the $h$ coefficients on each side of $\mathcal{D}\mathcal{F} = \mathcal{F}$ gives
\[
    q_h = \sum_{i=1}^k p_i q_{g_i^{-1} h} \leq \sum_{i=1}^k p_i q_{h} = q_h,
\]
which can only hold if each $q_{g_i^{-1} h} = q_h$.  Now, for each $i$, apply this same argument by choosing $g_i^{-1}h$ as the maximum element instead of $h$.  Since $\{ g_i^{-1} : i \in [k]\}$ is also a generating set of $G$, by continuing in this way we see that $q_g = q_h$ for all $g \in G$, completing the proof.
\end{proof}

Thus in the case of $S_n$, the probability of having $\ell$ distinct cycles after choosing $n$ random juggling cards tends to $\big[{b\atop \ell}\big]/b!$ as $n$ tends to infinity, where $\big[{b\atop \ell}\big]$ indicates the Stirling number of the first kind, i.e., the number of ways to decompose $\{1,\ldots,b\}$ into $\ell$ disjoint cycles.  Indeed, we note without proof that it converges to this quite rapidly.  By following the lines of the proof of Theorem~\ref{fact4}, only in the ``end cases'' where $L(\sigma)$ is within $\ell$ of $b$ does the proportion not equal precisely $\big[{b\atop \ell}\big]/b!$.

\subsection*{Acknowledgment}
Part of this research was conducted while Steve Butler was a visitor at the Institute for Mathematics and its Applications.  In addition, Steve Butler was partially supported by an NSA Young Investigator Grant.

\end{document}